\documentclass[a4paper,11pt,leqno]{article}
\author{F\'elix del Teso M\'endez}
\usepackage[usenames,dvipsnames,svgnames,table]{xcolor}
\usepackage{amsmath,amsthm,amssymb,verbatim,tabularx,graphicx}
\usepackage{fancyhdr}
\usepackage{enumerate}
\usepackage{tocbibind}
\usepackage{epic}
\parskip 4pt
\parindent 4pt
\usepackage{pgf,tikz}
\usetikzlibrary{arrows}
\setlength{\oddsidemargin}{0.25in}
\addtolength{\textwidth}{2cm}
\newtheorem{teor}{Theorem}[section]

\newtheorem{nota}{Note}

\newtheorem{lemma}[teor]{Lemma}
\newtheorem{coro}[teor]{Corollary}

\newcommand\RR{\mathbb{R}^N}

\newcommand\lap{(-\Delta)^{\sigma/2}}

\newcommand\musig{\mu_\sigma}
\newcommand\nusig{\nu_\sigma}
\numberwithin{equation}{section}

\begin{document}

\begin{center}
{\LARGE\textbf{Finite difference method for a general \\[6pt] fractional porous medium equation}}

\vspace{.5cm}

\Large{ by \ F\'elix del Teso and  Juan Luis V\'azquez\\[6pt]
\em Universidad Aut\'onoma de Madrid \footnote{ E-mail addresses: \textsf{felix.delteso@uam.es}, \quad  \textsf{juanluis.vazquez@uam.es} } }
\vspace{1cm}

\date{}
\end{center}

\begin{abstract}
We formulate a numerical method to solve the porous medium type equation with fractional diffusion
\[\displaystyle\frac{\partial u}{\partial t}+\lap (u^m)=0\]
posed for $x\in \RR$, $t>0$, with $m\geq 1$, $\sigma \in (0,2)$, and nonnegative initial data $u(x,0)$. We prove existence and uniqueness of the solution of the numerical method and also the convergence to the theoretical solution of the equation with an order depending on $\sigma$. We also propose a two points approximation to a $\sigma$-derivative with order $O(h^{2-\sigma})$.
\end{abstract}


\section{Introduction}
In this paper we discuss a numerical method to solve the Cauchy problem
\begin{equation}\label{problem}
\left\{
\begin{array}{ll}
\displaystyle\frac{\partial u}{\partial t}+\lap ( |u|^{m-1}u)=0,& x\in \RR,\ t>0,\\[3mm]
u(x,0)=f(x), & x \in \RR,
\end{array}
\right.
\end{equation}
for exponents $m\geq1$, space dimension $N\geq1$, and fractional exponent $\sigma \in (0,2)$. After presenting the numerical method,  we prove existence and uniqueness of solution to the method. Moreover, we establish the convergence of the method to the theoretical solution of the problem. In the limit $\sigma \to 2$ we recover the standard Porous Medium Equation
\[
\displaystyle\frac{\partial u}{\partial t}+\Delta ( |u|^{m-1}u)=0,
\]
for which the numerical solution has been studied by many authors, either in itself of as part of the study of the class degenerate parabolic equations,  see e.g. \cite{awz, ciarlet, dibhoff, gravjam, hoffluc, jager, McCamy, NocVer, Rose, tm} for the earlier literature.

We recall that the fractional Laplacian operator $\lap$, $0<\sigma <2$, is probably the best known example in the class of nonlocal diffusion operators that are studied because of their interest both in theory and applications, cf. \cite{landkof, Stein}. Indeed, fractional diffusions have a long history in modeling problems in physics, finance, mathematical biology and hydrology.  The fractional Laplacian operator is usually defined via Fourier transform for any function $f$ in the Schwartz class as the operator such that
\[
\mathcal{F}(\lap u)(\xi)=|\xi|^\sigma\mathcal{F}(u)(\xi)\normalcolor
\]
or via Riesz potential, for a more general class of functions, as
\begin{equation}\label{sing.int}
\lap f(x)=C_{N,\sigma} \mbox{P.V.} \int_{\RR}\frac{f(x)-f(y)}{|x-y|^{N+\sigma}}dy\,,
\end{equation}
where $C_{N,\sigma}=2^{\sigma-1}\sigma \Gamma\left(\frac{N+\sigma}{2}\right)/\pi^{N/2} \Gamma\left(1-\frac{\sigma}{2}\right)$ is a normalization constant. For an equivalence of both formulations see for example \cite{val}.

The numerical analysis of  Problem \eqref{problem} was started in paper \cite{teso} where the case $\sigma=1$ was studied. The extension method that we use to implement the fractional Laplacian has a number of specific difficulties for $\sigma\ne 1$ that we address here. Previous works  dealing with the  numerical analysis of nonlocal equations of this type are due to Cifani, Jakobsen, and Karlsen in \cite{jakob}, \cite{jakob2}, \cite{jakob3}. In particular, they formulate some convergent numerical methods for entropy and viscosity solutions.

One of the main differences of our work is that we do not directly deal  with the integral formulation of the fractional  Laplacian;  instead of this, we pass through the  Caffarelli-Silvestre extension \cite{caff}, which replaces the calculation of the singular integral \eqref{sing.int} by the calculation of a convenient extension in one more space dimension. While in the case $\sigma=1$  the extension of $u^m(x,t)$ is just a function $w(x,y,t)$ which is harmonic in $(x,y)$ for every $t$ fixed, in the cases $\sigma\ne 1$ the extension is a so-called $\sigma$-harmonic function, i.\,e., the solution of an elliptic equation with a weight that is either degenerate or singular at $y=0$. The numerical analysis of the elliptic PDE $(-\Delta)^s u =f$ in a bounded domain with zero boundary data via the extension method has been recently studied by Nochetto and collaborators using  finite elements, \cite{NOS13}.

The paper is organized as follows. In {Section 2} we give a brief description of the problem we are concerned with. We present an equivalent way of expressing the problem avoiding the nonlocal operator formulation. For numerical reasons it is convenient to start by posing the problem in a bounded domain. In {Section 3} we propose a two-points approximation formula for the weighted derivative that appears in the extension formulation,  as well as a proof of the order of convergence; besides, a numerical experiment is given for this approximation. In {Section 4} we present the numerical scheme and prove convergence towards the theoretical solution. In {Sections 5 and 6} we study the optimal and minimal rates of convergence that we can reach with the proposed numerical scheme. Finally, in {Section 7} we show how the numerical solution posed in a bounded domain $\Omega$ converges to the theoretical solution posed in $\RR$ as $\Omega\to \RR$.



\section{Local formulation of the non-local problem}
\subsection{Problem in  $\RR$}
Our aim is to find numerical approximations for the solutions   of the Cauchy Problem for the porous medium equation with fractional diffusion, stated in \eqref{problem}.  We will take $m\geq 1$, $\sigma \in(0,2)$, and the initial function $f\in L^1\cap L^\infty(\RR)$ and nonnegative. The general theory for existence, uniqueness and regularity of the solution of problem (\ref{problem}) can be found in \cite{afracpor2}. In particular, it is shown that problem (\ref{problem}) is equivalent to the so-called extension formulation,
\begin{equation}\label{problemext}\displaystyle
\left\{ \begin{array}{ll}\displaystyle
L_\sigma w(x,y,t)=0,& x \in \RR, \ y>0,\ t>0, \\[3mm]
\displaystyle\frac{\partial w^{1/m}}{\partial t}=\frac{\partial w}{\partial y^{\sigma}},& x \in \RR,\ y=0,\ t>0, \\[3mm]
w(x,0,0)=f^m(x) , & x\in \RR,
\end{array} \right.
\end{equation}
where the extension is defined in terms of the elliptic operator:
$$
L_\sigma v:= \nabla \cdot (y^{1-\sigma} \nabla v)\,,
 $$
while the {\sl normalized $\sigma$-derivative operator}  $\displaystyle \frac{\partial}{\partial y^\sigma}$  is defined, for any $\sigma \in (0,2)$, as
\begin{equation}\label{extoperator}
\frac{\partial v}{\partial y^\sigma}(x,0):=\mu_\sigma\lim_{y\to0} y^{1-\sigma } \frac{\partial v}{\partial y}(x,y), \ \ \ \ \ .
\end{equation}
where $\mu_\sigma =2^{\sigma-1}\Gamma(\sigma/2)/\Gamma(1-\sigma/2)$. The equivalence between (\ref{problem}) and (\ref{problemext}) holds in the sense of trace and $L_\sigma$-harmonic extension operators, that is,
\[ u(x,t)=Tr(w^{1/m}(x,y,t)), \ \ \  w(x,y,t)=E_\sigma(u^m(x,t)).\]

\begin{nota}
The extensi\'on operator $L_\sigma$ defined in {\rm (\ref{extoperator})} can also be written as
\[
L_\sigma v(x,y)= y^{1-\sigma}\Delta v(x,y) + (1-\sigma)y^{-\sigma}\frac{\partial v}{\partial y}(x,y)\,,
 \]
where $\Delta$ is the $N+1$ dimensional Laplacian operator.
\end{nota}


\subsection{The problem in a bounded domain. Notations}
In order to construct a numerical solution to Problem (\ref{problemext}), we perform a monotone approximation of the solution in the whole space by the solutions of the problem posed in a bounded domain.

We consider positive numbers $X_1,\dots, X_N,Y,T$. We define the bounded domain $\Omega=(-X_1,X_1)\times . . . \times (-X_N,X_N)\times(0,Y)$, and set $\Gamma=\partial \Omega$. For convenience we also divide the boundary in two parts: $$
\Gamma_d=[-X_1,X_1]\times . . . \times [-X_N,X_N]\times \{0\}
$$
(the base), and $\Gamma_h=\partial \Omega \backslash \Gamma_d $ (the lateral boundary of the extended domain). With these notations, we formulate the corresponding problem in the bounded domain as
\begin{equation}\label{problemextbdd}\displaystyle
\left\{ \begin{array}{ll}\displaystyle
L_\sigma w(x,y,t)=0,& (x,y) \in \Omega ,\ t\in(0,T], \\[3mm]
\displaystyle\frac{\partial w^{1/m}}{\partial t}(x,0,t)=\frac{\partial w}{\partial y^{\sigma}}(x,0,t),& (x,y) \in \Gamma_d,\ t\in(0,T], \\[3mm]
w(x,0,0)=f^m(x) , & (x,y)\in \Gamma_d,\\[3mm]
w(x,y,t)=0, & (x,y)\in \Gamma_h\,.
\end{array} \right.
\end{equation}
Notice that we have imposed homogeneous boundary conditions on $\Gamma_h$.

In the sequel, we will consider the problem with $N=1$ in order to simplify de notation, but all the arguments are also valid for $N>1$ without much effort.

\begin{picture}(350,200)

\put(0, 15){\line(1, 0){350 }}

\put(175, 0){\line(0, 1){180 }}

\put(20,15){\dashbox{5}(310,140){}}

\put(20, 15.5){\line(1, 0){310 }}
\put(20, 14.5){\line(1, 0){310 }}

\put(20,155.5){\dashbox{5}(310,0){}}
\put(20,154.5){\dashbox{5}(310,0){}}

\put(19.5,15){\dashbox{5}(0,140){}}
\put(20.5,15){\dashbox{5}(0,140){}}

\put(330.5,15){\dashbox{5}(0,140){}}
\put(329.5,15){\dashbox{5}(0,140){}}

\put(325,0){\shortstack[l]{X}}

\put(10,0){\shortstack[l]{-X}}

\put(125, 80){\shortstack[l]{\huge$\Omega$}}

\put(180,160){\shortstack[l]{Y}}

\put(250, 180){\line(1, 0){40 }}
\put(250, 180.5){\line(1, 0){40 }}
\put(250, 179.5){\line(1, 0){40 }}
\put(295,178){$\Gamma_d$}

\put(250, 170){\dashbox{5}(40,0){}}
\put(250, 169.5){\dashbox{5}(40,0){}}
\put(250, 170.5){\dashbox{5}(40,0){}}
\put(295,166){$\Gamma_h$}

\end{picture}

\

 Let us announce at this point some other notations that will appear in the sequel:
 $\Lambda=\max_{i,k,j}|(\tau_j)_i^k|$ \ will be the local truncation error of the numerical method, while $a$ will be the discretization order for the numerical implementation $L^D_\sigma$ of the extended operator $L_\sigma$, cf. Section \ref{discreteform};  $E_j$ and $F_j$ are the two options for the total error of the numerical method at time $t_j$, which are used in the convergence proofs of Subsection \ref{errors}. We will use the Landau notation $O(.)$ for the order of a function relative to another one. Some constants appear: $\musig$ is defined after \eqref{extoperator}, and  $\nusig=\sigma\musig$


\section{Discretization of the $\sigma$-derivative}
Given the $\sigma$-harmonic extension problem
\begin{equation}\label{sigmaextension}
\left\{
\begin{array}{ll}
L_\sigma v(x,y)=\nabla \cdot (y^{1-\sigma}\nabla v)=0& x\in \RR,\ t>0,\\[3mm]
v(x,0)=g(x) & x \in \RR,
\end{array}
\right.
\end{equation}
the explicit solution is given by a convolution of the boundary condition with the kernel
\begin{equation}
P(x,y)=d_{N,\sigma}\frac{y^\sigma}{(|x|^2+y^2)^{\frac{N+\sigma}{2}}},
\end{equation}
that is,
\begin{equation}
v(x,y)=\int_{\RR}P(x-\xi,y)g(\xi)d\xi.
\end{equation}
In view of this, we introduce the discretized $\sigma$-derivative at $y=0$ as follows:
\begin{equation}\label{discret}
F(x,y):=\sigma \frac{v(x,y)-v(x,0)}{y^{\sigma}}=\sigma d_{N,\sigma}\int_{\RR}\frac{g(\xi)-g(x)}{(|x-\xi|^2+y^2)^{\frac{N+\sigma}{2}}}d\xi\,.
\end{equation}
With this definition, we have
\begin{equation}
\lim_{y\to0}F(x,y)=\lim_{y\to0} y^{1-\sigma}\frac{\partial v}{\partial y}(x,y).
\end{equation}
Notice that the operator $\frac{\partial}{\partial y^\sigma }$ satisfies
\[
\frac{\partial v}{\partial y^\sigma}(x,0):=\mu_{\sigma}\lim_{y\to0} y^{1-\sigma}\frac{\partial v}{\partial y}(x,y)=-\lap g(x).
\]
Summing up, it seems that $\mu_{\sigma}F(x,y)$ could be a good candidate to be used as the discretization of $\frac{\partial v}{\partial y^\sigma}(x,0)$.

We are interested in the order of the discretization. For that we have to compute the difference
\begin{eqnarray*}
\mu_{\sigma}  F(x,y)-\frac{\partial v}{\partial y^\sigma}(x,0)&=&\mu_\sigma \sigma d_{N,\sigma}\int_{\RR}\frac{g(\xi)-g(x)}{(|x-\xi|^2+y^2)^{\frac{N+\sigma}{2}}}d\xi+\lap g(x)\\[3mm]
&=&C_{N,\sigma}\int_{\RR}[g(\xi)-g(x)]\bigg(\frac{1}{(|x-\xi|^2+y^2)^{\frac{N+\sigma}{2}}}-\frac{1}{|x-\xi|^{N+\sigma}}\bigg)d\xi
\end{eqnarray*}

\begin{teor}\label{orderdiscretization}
Consider $F$ and $\displaystyle \frac{\partial v}{\partial y^{\sigma}}$ as above, and $g\in C^2(\RR)$. Then,
\[
|\mu_\sigma F(x,y)-\frac{\partial v}{\partial y^\sigma}(x,0)|\leq O(y^{2-\sigma}),
\]
that is,
\[|\musig \sigma \frac{v(x,y)-v(x,0)}{y^{\sigma}} - \frac{\partial v}{\partial y^\sigma}(x,0)|\leq O(y^{2-\sigma}).\]
\end{teor}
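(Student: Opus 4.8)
The plan is to start directly from the integral representation of the error derived just above the statement, namely
\[
\musig F(x,y)-\frac{\partial v}{\partial y^\sigma}(x,0)=C_{N,\sigma}\intr [g(\xi)-g(x)]\Big(\frac{1}{(|x-\xi|^2+y^2)^{\frac{N+\sigma}{2}}}-\frac{1}{|x-\xi|^{N+\sigma}}\Big)\,d\xi,
\]
and to estimate the right-hand side by combining a Taylor expansion of $g$ with the exact scaling of the kernel difference. First I would change variables $z=\xi-x$ and abbreviate the kernel difference as
\[
K(z,y):=\frac{1}{(|z|^2+y^2)^{\frac{N+\sigma}{2}}}-\frac{1}{|z|^{N+\sigma}},
\]
observing that $K(\cdot,y)$ is radially symmetric, hence even in $z$, and that the first summand is bounded near $z=0$ for $y>0$ while the $\lap g(x)$ contribution is present in principal-value sense.

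The key step is a symmetrization. Since $K$ is even, the change $z\mapsto -z$ in half of the integral gives
\[
\intr [g(x+z)-g(x)]K(z,y)\,dz=\frac12\intr [g(x+z)+g(x-z)-2g(x)]\,K(z,y)\,dz,
\]
which cancels the first-order (gradient) term and at the same time renders the integral absolutely convergent, thereby legitimizing the principal-value character of the $\lap g(x)$ piece. Using $g\in C^2(\RR)$ I would then bound the symmetric second difference by $|g(x+z)+g(x-z)-2g(x)|\le \|D^2g\|_\infty\,|z|^2$.

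Next comes the scaling that produces the exponent. Substituting $z=yw$, so that $dz=y^N\,dw$ and
\[
K(yw,y)=y^{-(N+\sigma)}\Big[(1+|w|^2)^{-\frac{N+\sigma}{2}}-|w|^{-(N+\sigma)}\Big],
\]
the factor $|z|^2=y^2|w|^2$ multiplies a kernel of homogeneity $-(N+\sigma)$, and together with $y^N$ this yields precisely the power $y^{2-\sigma}$, leaving
\[
\Big|\musig F(x,y)-\frac{\partial v}{\partial y^\sigma}(x,0)\Big|\le \frac{C_{N,\sigma}}{2}\,\|D^2g\|_\infty\,y^{2-\sigma}\intr |w|^2\,\Big|\frac{1}{(1+|w|^2)^{\frac{N+\sigma}{2}}}-\frac{1}{|w|^{N+\sigma}}\Big|\,dw.
\]
It then remains to check that the dimensionless $w$-integral is finite for every $\sigma\in(0,2)$: near $w=0$ the integrand behaves like $|w|^{2-N-\sigma}$, integrable because $2-\sigma>0$; near infinity, expanding $(1+|w|^2)^{-\frac{N+\sigma}{2}}$ shows a cancellation leaving a decay like $|w|^{-N-\sigma}$, integrable for all $\sigma>0$. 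This gives the claimed bound $O(y^{2-\sigma})$.

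I expect the main obstacle to be the behaviour at the origin when $\sigma\ge 1$: a naive estimate $|g(x+z)-g(x)|\le\|\nabla g\|_\infty|z|$ fails to be integrable against $|z|^{-(N+\sigma)}$ there, so the symmetrization that removes the first-order term is essential rather than cosmetic. Once that cancellation is in place, the remaining argument is a routine scaling computation and an integrability check at $0$ and $\infty$.
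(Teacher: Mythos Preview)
Your argument is correct; it reaches the same bound by a somewhat different and tidier route than the paper.

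The paper does not symmetrize globally. Instead it splits the integral at a fixed radius $R$: on $B_R(x)$ it subtracts only the linear term $\nabla g(x)\cdot(\xi-x)$ (which integrates to zero by oddness) and bounds the remainder by $\|D^2g\|_\infty|\xi-x|^2$, while on $B_R(x)^c$ it uses the crude bound $2\|g\|_\infty$; in each region it then changes variables and applies the mean value theorem to $f(t)=(r^2+t)^{(N+\sigma)/2}$, which forces a case distinction $N+\sigma\ge 2$ versus $N+\sigma<2$. The outcome is $|I_B|\le C\,y^{2-\sigma}$ near the origin and the sharper $|I_C|\le C\,y^{2}$ far away. Your global symmetrization together with the single rescaling $z=yw$ bypasses both the near/far split and the $N+\sigma$ case split, at the price of using $\|D^2g\|_\infty$ everywhere (rather than $\|g\|_\infty$ at infinity) and of not isolating the better $O(y^2)$ behaviour of the tail. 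Your integrability check at $0$ and $\infty$ for the scaled kernel is exactly what is needed and is valid for all $\sigma\in(0,2)$.
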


\begin{proof}
We want to have an estimate in terms of $y$ of the expression
\[
I=\frac{1}{C_{N,\sigma}}\left(\musig F(x,y)-\frac{\partial v}{\partial y^\sigma}(x,0)\right)=\int_{\RR}[g(\xi)-g(x)]\bigg(\frac{1}{(|x-\xi|^2+y^2)^{\frac{N+\sigma}{2}}}-\frac{1}{|x-\xi|^{N+\sigma}}\bigg)d\xi
\]
We split the integral into \ $I_R + I_C$, where $I_R$ is the above integral computed in $B_R(x)$ and $I_C$ the integral computed in $B_R^c :=\RR\backslash B_R(x)$. For simplicity, we are going to estimate all the integrals when $x=0$, but the calculation for a general $x$ is analogous.

Let us first estimate the integral outside the origin, $I_C$:
\begin{eqnarray*}
|I_C|&\leq& 2||g||_{\infty} \int_{B_R^c}\frac{1}{|\xi|^{N+\sigma}} - \frac{1}{(|\xi|^2+y^2)^{\frac{N+\sigma}{2}}}d\xi\\[2mm]
&=&\frac{C_g}{y^{\sigma}}\int_{B_{R/y}^c} \frac{(|z|^2+1)^{\frac{N+\sigma}{2}}-|z|^{N+\sigma}}{|z|^{N+\sigma}(|z|^2+1)^{\frac{N+\sigma}{2}}}dz\\[2mm]
&=&\frac{C_{g,N}}{y^{\sigma}}\int_{R/y}^\infty  \frac{(r^2+1)^{\frac{N+\sigma}{2}}-r^{N+\sigma}}{r^{\sigma+1}(r^2+1)^{\frac{N+\sigma}{2}}}dr=\frac{C_{g,N}}{y^{\sigma}}\int_{R/y}^\infty  \frac{f(1)-f(0)}{r^{\sigma+1}(r^2+1)^{\frac{N+\sigma}{2}}}dr.
\end{eqnarray*}
We first use the change of variables $\xi = yz$ and then a change to polar coordinates. We also define, for a fixed $r$ the function
\[f(x)=(r^2+x)^{\frac{N+\sigma}{2}},\]
with derivative
\[f'(x)=\frac{N+\sigma}{2}(r^2+x)^{\frac{N+\sigma-2}{2}}.
\]
Then $f(1)-f(0) =f'(\eta)$ for some $\eta \in[0,1]$. Now we need to consider two separate cases.

\medskip

\noindent $\bullet$ {If $\mathbf {N+\sigma \geq 2}$,} then,
 \[f'(x)\leq \frac{N+\sigma}{2}(r^2+1)^{\frac{N+\sigma-2}{2}} \ \ \forall x\in [0,1].\]

 In this way,
 \begin{eqnarray*}
 |I_C|&\leq& \frac{C_{g,N,\sigma}}{y^{\sigma}}\int_{R/y}^\infty  \frac{(r^2+1)^{\frac{N+\sigma-2}{2}}}{r^{\sigma+1}(r^2+1)^{\frac{N+\sigma}{2}}}dr=\frac{C_{g,N,\sigma}}{y^{\sigma}}\int_{R/y}^\infty  \frac{1}{r^{\sigma+1}(r^2+1)}dr\\[2mm]
 &\leq&\frac{C_{g,N,\sigma}}{y^{\sigma}}\int_{R/y}^\infty  \frac{1}{r^{\sigma+3}}dr=\frac{C_{g,N,\sigma}}{y^{\sigma}}\left(\frac{y}{R}\right)^{\sigma+2}=\frac{C_{g,N,\sigma}}{R^{\sigma+2}}y^2.
 \end{eqnarray*}

\medskip

\noindent $\bullet$ {If $\mathbf {N+\sigma < 2}$,} that is, $N=1$ and $\sigma <1$ then,
 \[f'(x)\leq \frac{1+\sigma}{2}r^{\sigma-1} \ \ \forall x\in [0,1].\]

 In this way,
  \begin{eqnarray*}
 |I_C|&\leq& \frac{C_{g,\sigma}}{y^{\sigma}}\int_{R/y}^\infty  \frac{r^{\sigma-1}}{r^{\sigma+1}(r^2+1)^{\frac{1+\sigma}{2}}}dr=\frac{C_{g,\sigma}}{y^{\sigma}}\int_{R/y}^\infty  \frac{1}{r^{2}(r^2+1)^{\frac{1+\sigma}{2}}}dr\\[2mm]
 &\leq&\frac{C_{g,\sigma}}{y^{\sigma}}\int_{R/y}^\infty  \frac{1}{r^{\sigma+3}}dr=\frac{C_{g,\sigma}}{R^{\sigma+2}}y^2.
 \end{eqnarray*}

This means that for every $N\geq1$ and $\sigma \in (0,2)$,
\begin{equation}
|I_C|\leq \frac{C_{g,N,\sigma}}{R^{\sigma+2}}y^2.
\end{equation}

And now, we estimate the integral inside de ball of radius $R$, $I_R$: since $ \frac{\nabla g(0)\cdot \xi}{(|\xi|^2+y^2)^\frac{N+\sigma}{2}}$ and $ \frac{\nabla g(0)\cdot \xi}{|\xi|^{N+\sigma}}$ are both odd functions with respect to $\xi$, they integrate zero in any ball centered in the origin. In this way,
\begin{eqnarray*}
|I_B|&=&\left| \int_{B_R}[g(\xi)-g(0)- \nabla g(0)\cdot \xi]\bigg(\frac{1}{(|\xi|^2+y^2)^{\frac{N+\sigma}{2}}}-\frac{1}{|\xi|^{N+\sigma}}\bigg)d\xi \right|\\[2mm]
&\leq&||D^2 g||_{\infty} \int_{B_R}|\xi|^2\bigg(\frac{1}{|\xi|^{N+\sigma}}-\frac{1}{(|\xi|^2+y^2)^{\frac{N+\sigma}{2}}}\bigg)d\xi\\[2mm]
&=&C_{g,N}\int^{R}_0  r^{1-\sigma}\frac{(r^2+y^2)^{\frac{N+\sigma}{2}}-r^{N+\sigma}}{(r^2+y^2)^{\frac{N+\sigma}{2}}}dr.
\end{eqnarray*}
With the same trick with the function $f$ defined above, we need to split the calculation in two cases again.

\medskip

\noindent $\bullet$ {If $\mathbf {N+\sigma \geq 2}$,} then,
\begin{eqnarray*}
|I_B|&\leq& C_{g,N,\sigma}\int^{R}_0  r^{1-\sigma}\frac{(r^2+y^2)^{\frac{N+\sigma-2}{2}}y^2}{(r^2+y^2)^{\frac{N+\sigma}{2}}}dr=C_{g,N,\sigma}y^2\int^{R}_0  r^{1-\sigma}\frac{1}{r^2+y^2}dr\\[2mm]
&=&C_{g,N,\sigma}y^2\int^{R/y}_0  y^{1-\sigma}s^{1-\sigma}\frac{y}{y^2(s^2+1)}ds=C_{g,N,\sigma}y^{2-\sigma}\int^{R/y}_0 \frac{s^{1-\sigma}}{(s^2+1)}ds.\\[2mm]
&\leq&C_{g,N,\sigma}y^{2-\sigma}\int^{\infty}_0 \frac{s^{1-\sigma}}{(s^2+1)}ds=d_{g,N,\sigma}y^{2-\sigma},
\end{eqnarray*}
where we have used the change of variables $r=sy$ and the integrability of the function $\frac{s^{1-\sigma}}{(s^2+1)}$ for every $\sigma \in (0,2)$.

\medskip

\noindent $\bullet$ {If $\mathbf {N+\sigma < 2}$,} that is, $N=1$ and $\sigma<1$ then,
\begin{eqnarray*}
|I_B|&\leq&C_{g}\int^{R}_0  r^{1-\sigma}\frac{(r^2+y^2)^{\frac{1+\sigma}{2}}-r^{1+\sigma}}{(r^2+y^2)^{\frac{1+\sigma}{2}}}dr= C_{g,\sigma}\int^{R}_0  r^{1-\sigma}\frac{r^{1+\sigma-2}y^2}{(r^2+y^2)^{\frac{1+\sigma}{2}}}dr\\[2mm]
&=&C_{g,\sigma}y^2\int^{R}_0  \frac{1}{(r^2+y^2)^{\frac{1+\sigma}{2}}}dr= C_{g,\sigma}y^2\int^{R/y}_0  \frac{y}{y^{1+\sigma}(s^2+1)^{\frac{1+\sigma}{2}}}ds\\[2mm]
&=&C_{g,\sigma}y^{2-\sigma}\int^{R/y}_0  \frac{1}{(s^2+1)^{\frac{1+\sigma}{2}}}ds=C_{g,\sigma}y^{2-\sigma}\int^{\infty}_0  \frac{1}{(s^2+1)^{\frac{1+\sigma}{2}}}ds\\[2mm]
&=& d_{g,\sigma} y^{2-\sigma}
\end{eqnarray*}
where again we have used the change  $r=sy$ and the integrability of the function $ \frac{1}{(s^2+1)^{\frac{1+\sigma}{2}}}$ for every $\sigma >0$.
\end{proof}

\begin{nota}
Notice that the case $\sigma\not = 1$ generalizes the usual result of the order of discretization of the forward Euler discretization for a first derivative.
\end{nota}
\noindent {\bf Numerical experiments. } Let us show that the results given in Theorem \ref{orderdiscretization} are in fact optimal, i.\,e.,  that we could not expect to have an order better than $2-\sigma$ with the discretization given by $F(x,y)$ in (\ref{discret}).
We choose as test function the following function that does not depend on $x$,
\begin{equation}
f(x,y)=e^{y^2},
\end{equation}

 It is easy to see that for every $\sigma \in (0,2)$ we have that
\[
\sigma\frac{\partial f}{\partial y^\sigma}(x,0)= \lim_{y\to 0} y^{1-\sigma}\frac{\partial f}{\partial y}(x,y)=\lim_{y\to 0}2y^{2-\sigma}e^{y^2}=0.\]
The error $E$ for the discretization $F$ will be
\[E(y)=|F(x,y)-0|=|F(x,y)|.\]

\begin{table}[h!]
\caption{Numerical results for $\sigma=1/2, 1 \mbox{ and }3/2$.}
        		\label{numres}
$$
 \begin{tabular}{|c||c|c|c|c|}
  \hline
 $\sigma$ & $y$&$E(y)$ & $\alpha$&$\sigma_e$\\
 \hline
 \hline
  1	&  1/2	&0.5681     & -&     -          \\
   	&   1/4 	& 0.2580           &1.1388   &      0.8612              \\
   	&  1/8	&0.1260    &    1.0340 &    0.9660        \\
     	&  1/16	& 0.0626     &  1.0085&       0.9915       \\
  \hline
  \hline
  \hline
  1/2	&  1/2	&0.2008       &     -        &      -         \\
   	&   1/4 	&0.0645     &1.6388 &      0.3612              \\
   	&  1/8	& 0.0223    &1.5340 &      0.4660       \\
     	&  1/16	 & 0.0078     &1.5085 &       0.4915       \\
  \hline
  \hline
  \hline
   3/2	&  1/2	&1.2050                &- &       -       \\
   	&   1/4 	&0.7739    &0.6388       &     1.3612              \\
   	&  1/8	&0.5345    &0.5340&       1.4660       \\
     	&  1/16	&0.3757    & 0.5085 &      1.4915     \\
\hline
 \end{tabular}
$$
\end{table}
The result of Theorem \ref{orderdiscretization} says that we have an error lower than $O(y^{2-\sigma})$, that is
\[E(y)= K y^\alpha,\]
for some $K>0$ and $\alpha=2-\sigma$.  In the next table we show the experimental results for different $\sigma$'s and $y$'s. The variable $\sigma_e$ denotes the $2-\alpha$, that is, the $\sigma$ given by the experiments.

The way of computing $\alpha$ is the usual: Given two errors $E(y_1)$ and $E(y_2)$,
\[
E(y_1)= K y_1^\alpha,\quad  E(y_2)= K y_2^\alpha,
\]
then,
\[
\alpha=\frac{\log\left(E(y_1)/E(y_2)\right)}{\log(y_1/y_2)}.
\]
The results are shown in Table \ref{numres}.

\section{Discrete formulation}\label{discreteform}

In order to solve problem (\ref{problemextbdd}) for $t\in[0,T]$, we first perform a time and space discretization.  For the time discretization we choose  $J$ uniformly spaced steps, and then $\Delta t=T/J$ and
\[
0\leq j \Delta t\leq T, \ \ \ j=0, . . .   ,J, \qquad  t_j=j\Delta t.
\]
We also need to discretize the space domain  $\overline{\Omega}=[-X,X]\times[0,Y]$. Let $I,K$ be the number  of steps on each space direction,
\[
0 \leq i \Delta x  \leq 2X , \ \ \ i=0, . . .   ,I \ \ \mbox{where } \Delta x=2X/I \mbox{ and } x_i= i \Delta x-X,
\]
\[
0 \leq k \Delta y \leq Y , \ \ \ k=0, . . .   ,K \ \ \mbox{where } \Delta y=Y/K \mbox{ and } y_k= k \Delta y.
\]

\begin{picture}(350,190)

\put(15, 15){\line(1, 0){320 }}

\put(175, 0){\line(0, 1){180 }}

\put(20,15){\dashbox{5}(310,140){}}

\put(20,15.5){\dashbox{5}(310,0){}}
\put(20,14.5){\dashbox{5}(310,0){}}

\put(20,155.5){\dashbox{5}(310,0){}}
\put(20,154.5){\dashbox{5}(310,0){}}

\put(19.5,15){\dashbox{5}(0,140){}}
\put(20.5,15){\dashbox{5}(0,140){}}

\put(330.5,15){\dashbox{5}(0,140){}}
\put(329.5,15){\dashbox{5}(0,140){}}

\put(125, 80){\shortstack[l]{\huge$\Omega$}}


\put(20,15){\grid(310,140)(15.5,15.5)}
\put(14,5){\shortstack[l]{$x_0$}}
\put(30,5){\shortstack[l]{$x_1$}}
\put(46,5){\shortstack[l]{$x_2$}}
\put(62,5){\shortstack[l]{$. . . $}}
\put(77,5){\shortstack[l]{$x_i$}}
\put(91,5){\shortstack[l]{$. . . $}}

\put(325,5){\shortstack[l]{$x_I$}}

\put(5, 15){\shortstack[l]{$y_0$}}
\put(5, 29){\shortstack[l]{$y_1$}}
\put(5, 43){\shortstack[l]{$y_2$}}
\put(8, 55){\shortstack[l]{$.$}}
\put(8, 59){\shortstack[l]{$.$}}
\put(8, 63){\shortstack[l]{$.$}}
\put(5, 70){\shortstack[l]{$y_k$}}
\put(8, 84){\shortstack[l]{$.$}}
\put(8, 88){\shortstack[l]{$.$}}
\put(8, 92){\shortstack[l]{$.$}}

\put(5, 155){\shortstack[l]{$y_K$}}
\end{picture}
\\\\
We use the notation
\begin{equation}\label{notation}
w(x_i,y_k,t_j)=(w_j)_i^k
\end{equation}
for the values of the  theoretical solution   $w$ to Problem (\ref{problemextbdd}) at the points of the mesh, and
\begin{equation}\label{notation}
w(x_i,y_k,t_j)\approx(W_j)_i^k
\end{equation}
for the solution of the numerical method.

\subsection{Numerical Method}\label{num.meth}

 Let us assume that $\Delta y=\Delta x$. We consider a discretization $L^D_\sigma$ of the operator $L_\sigma$ with a specified  order $a$ to be chosen later. This means that  the following estimate holds
\begin{equation}\label{orderdisc}
\max_{i,k}|L_\sigma^D v_i^k-L_\sigma v(x_i,y_k)|\leq O(\Delta x^{a})
\end{equation}
 for every  regular enough function $v$,  where we use the shortened form $\Delta x^a$ to denote $(\Delta x)^a$.
For each time step $j=1,. . . ,J,$ we have to solve the following linear system of equations
\begin{equation}\label{numericmethod}\displaystyle
\left\{ \begin{array}{ll}\displaystyle
L_\sigma^D \left[(W_j)_i^k\right] =0, &  0<i<I,
 0<k<K,\\[3mm]\displaystyle
(W_j)_i^0 =\bigg[\nusig\frac{\Delta t}{\Delta x^\sigma} \big({(W_{j-1})}_i^1-{(W_{j-1})}_i^0\big)  +[(W_{j-1})_i^0]^{1/m}\bigg]^m, & \mbox{if } 0<i<I,\\[3mm]\displaystyle
(W_j)_i^k=0,& \mbox{on the $\Gamma_h$ nodes. }
\end{array} \right.
\end{equation}
 Recall that $\nusig=\sigma\musig$ and note that the second equation is explicit in the sense that all the terms in the right-hand side of the equation are known from the previous step.  To start the numerical method we will use the solution of
\begin{equation}\nonumber\displaystyle
\left\{ \begin{array}{ll}\displaystyle
L_\sigma^D \left[(W_0)_i^k\right] =0, &  0<i<I,
 0<k<K,\\[3mm]\displaystyle
(W_0)_i^0=f^m(x_i), & \mbox{if } 0<i<I,\\[3mm]\displaystyle
(W_0)_i^k=0,& \mbox{on the $\Gamma_h$ nodes. }\\
\end{array} \right.
\end{equation}

\subsection{Local truncation error}
We define the local truncation error $(\tau_j)_i^k$ as the error that comes from plugging the solution $w$ to Problem (\ref{problemextbdd}) into the numerical method (\ref{numericmethod}).
Let us also write
\begin{equation}
\Lambda=\max_{i,k,j}|(\tau_j)_i^k|.
\end{equation}

\begin{teor}\label{localtruncerror}
Let $w$ be the solution to Problem {\rm (\ref{problemextbdd})}.
Then,
\begin{equation}
\Lambda=O\,(\Delta t(\Delta x^{2-\sigma}+\Delta t)+\Delta x^a).
\end{equation}
where $a$ is the order of discretization of $L_\sigma^D$.
\end{teor}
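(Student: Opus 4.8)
The plan is to substitute the exact solution $w$ of Problem~(\ref{problemextbdd}) into each of the three groups of equations defining the scheme~(\ref{numericmethod}) and bound the resulting residuals separately, since $\Lambda$ is a maximum over all nodes and time levels. The three node types are the lateral nodes on $\Gamma_h$, the interior nodes, and the base nodes on $\Gamma_d$. On $\Gamma_h$ the residual vanishes identically, because $w=0$ there and the scheme prescribes the value $0$. At an interior node the scheme residual is $L_\sigma^D[(w_j)_i^k]$; since $w$ solves $L_\sigma w=0$ in $\Omega$, I would subtract the exact (zero) value $L_\sigma w(x_i,y_k,t_j)$ and invoke the order-$a$ property~(\ref{orderdisc}) of the discrete operator to obtain a residual of size $O(\Delta x^a)$.

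The main contribution comes from the base equation on $\Gamma_d$. I would work with the equivalent $1/m$-power form of the second line of~(\ref{numericmethod}), so that the residual at a base node reads
\[
(\tau_j)_i^0 = [(w_j)_i^0]^{1/m} - [(w_{j-1})_i^0]^{1/m} - \nusig\,\frac{\Delta t}{\Delta x^\sigma}\big((w_{j-1})_i^1 - (w_{j-1})_i^0\big).
\]
Factoring out $\Delta t$, this is $\Delta t$ times the difference between a forward time-difference quotient of $w^{1/m}$ and a spatial term. Using $\nusig=\sigma\musig$ and $\Delta x=\Delta y$, I recognize the spatial term as exactly $\musig F(x_i,\Delta y)$, with $F$ the discrete $\sigma$-derivative from~(\ref{discret}) applied to the function $v=w(\cdot,\cdot,t_{j-1})$, which is precisely the $\sigma$-harmonic extension of its own trace.

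I would then estimate each piece. A Taylor expansion in time gives
\[
\frac{[(w_j)_i^0]^{1/m} - [(w_{j-1})_i^0]^{1/m}}{\Delta t} = \frac{\partial (w^{1/m})}{\partial t}(x_i,0,t_{j-1}) + O(\Delta t),
\]
assuming $w^{1/m}$ is $C^2$ in $t$ near the base. Theorem~\ref{orderdiscretization} applied to $v=w(\cdot,\cdot,t_{j-1})$ gives $\musig F(x_i,\Delta y) = \frac{\partial w}{\partial y^\sigma}(x_i,0,t_{j-1}) + O(\Delta x^{2-\sigma})$. The base boundary condition of~(\ref{problemextbdd}) at $t=t_{j-1}$, namely $\frac{\partial (w^{1/m})}{\partial t}(x_i,0,t_{j-1}) = \frac{\partial w}{\partial y^\sigma}(x_i,0,t_{j-1})$, cancels the two leading terms, whence $(\tau_j)_i^0 = \Delta t\big(O(\Delta t)+O(\Delta x^{2-\sigma})\big)$. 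Taking the maximum over all nodes and time levels, the base contribution $O(\Delta t(\Delta t+\Delta x^{2-\sigma}))$ and the interior contribution $O(\Delta x^a)$ combine to the claimed bound.

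The step I expect to be the main obstacle is the legitimacy of invoking Theorem~\ref{orderdiscretization}: that result was proved through the whole-space Poisson-kernel representation, whereas here $w(\cdot,\cdot,t_{j-1})$ is $\sigma$-harmonic only in the bounded domain $\Omega$. I would justify that the same $O(\Delta x^{2-\sigma})$ order survives for a smooth $\sigma$-harmonic function with $C^2$ trace, noting that the estimate in Theorem~\ref{orderdiscretization} is local near $y=0$ and uses only boundedness of $g$ and $D^2 g$. A related delicate point is the time-regularity of $u=w^{1/m}$ entering the forward-Euler expansion, which may degenerate where $u$ vanishes; for the purpose of the truncation error I assume the exact solution is regular enough for the Taylor expansion to hold.
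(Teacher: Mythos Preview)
Your proposal is correct and follows essentially the same route as the paper: split by node type, use the order-$a$ property at interior nodes, and at the base nodes combine a forward-Euler time expansion of $w^{1/m}$ with the $O(\Delta x^{2-\sigma})$ approximation of the $\sigma$-derivative (Theorem~\ref{orderdiscretization}), then cancel the leading terms via the boundary condition in~(\ref{problemextbdd}). The two caveats you raise (applicability of Theorem~\ref{orderdiscretization} in the bounded domain and the required time regularity of $w^{1/m}$) are exactly the points the paper handles only by assuming sufficient smoothness of the exact solution, so you are not missing anything the paper supplies.
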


\begin{proof}
Of course, the local truncation error in the boundary nodes situated on the part $\Gamma_h$ of the boundary  is zero since we have imposed that the solution is zero in $\Gamma_h$ and equal to $f^m(x)$ in $\Gamma_d$ as in Problem (\ref{problemextbdd}).

If $0<i<I$ and $0<k<K$ (the interior nodes), then
\begin{eqnarray*}
(\tau_{j-1})_i^k : =L^D_\sigma \left[(w_j)_i^k\right]=
 L_\sigma w(x_i,y_k,t_j)+O(\Delta x^a)=O(\Delta x^a).
\end{eqnarray*}

If $0< i< I$ and $k=0$ (i.\,e., at the boundary  nodes $\Gamma_h$), the local truncation error is calculated as
\begin{eqnarray*}
(\tau_{j-1})_i^0&: =&\nusig\frac{\Delta t}{\Delta x^\sigma} \big[{(w_{j-1})}_i^1-{(w_{j-1})}_i^0\big]  +[(w_{j-1})_i^0]^{1/m}-[(w_{j})_i^0]^{1/m}\\
&=&\Delta t\big[ \frac{\partial w}{\partial y^\sigma}(x_i,0,t_{j-1})+O(\Delta x^{2-\sigma})\big]- \Delta t \big[\frac{\partial w^{1/m}}{\partial t}(x_i,0,t_{j-1})+O(\Delta t) \big]\\
&=&O(\Delta t \Delta x^{2-\sigma})+ O(\Delta t^2)=O(\Delta t(\Delta t+\Delta x^{2-\sigma})).
\end{eqnarray*}
The preceding calculation is done on the assumption that the theoretical solution is smooth, more precisely we use that $u$ is $C^2$ w.r.t. $t$ and $u^m$ is $C^2$ w.r.t. $x$. \end{proof}

\subsection{Existence and uniqueness of the numerical solution}

The quantity
\[
b_{max}= \max_{x}\{f^m(x)\}=\|f\|^m_\infty
\]
 appears below in the application of the maximum principle.  In the sequel we will often denote the power function $u^m$ by $\varphi(u)$. When $m\geq1$ then $\varphi'(u)=mu^{m-1}$ is a locally bounded function for $u\geq0$.

\begin{teor}\label{maxprin}[Discrete maximum principle]
Let $(W_j)_i^k$ be the solution to  Problem {\rm (\ref{numericmethod})} with $m\geq1$.  Assume that \
\begin{equation}\label{constant}
\Delta t\leq C(m,f) \Delta x^\sigma, \quad \mbox{ where} \quad C(m,f)=[m (b_{max})^{(m-1)}\nusig]^{-1}\,.
\end{equation}
Then for every $i,k,j$ we have
\begin{equation}
0\leq (W_j)_i^k \leq b_{max}.
\end{equation}
\end{teor}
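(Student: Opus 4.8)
The plan is to prove the two bounds simultaneously by induction on the time level $j$, treating separately the interior and lateral nodes, where $(W_j)_i^k$ solves the linear system $L_\sigma^D[\cdot]=0$, and the base nodes on $\Gamma_d$, where $(W_j)_i^0$ is given by the explicit nonlinear update. Throughout it is convenient to switch between the unknown $W\approx w=u^m$ and the physical variable $u=W^{1/m}$, using that $0\le W\le b_{max}$ is equivalent to $0\le u\le b_{max}^{1/m}=\|f\|_\infty$, and that $\varphi(u)=u^m$ has Lipschitz constant $m\,b_{max}^{(m-1)/m}=m\|f\|_\infty^{m-1}$ on $[0,\|f\|_\infty]$.

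For the base case $j=0$ the data on $\Gamma_d$ are $f^m(x_i)\in[0,b_{max}]$ and the data on $\Gamma_h$ vanish, while the interior values are fixed by $L_\sigma^D[(W_0)_i^k]=0$. Here I would invoke a discrete maximum principle for $L_\sigma^D$: if the discretization is of monotone (M-matrix) type---which is also what makes the linear systems uniquely solvable---then any grid function annihilated by $L_\sigma^D$ in the interior attains its maximum and minimum on the boundary nodes. As all boundary data lie in $[0,b_{max}]$, so do all interior values, settling $j=0$.

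For the inductive step, assume $0\le(W_{j-1})_i^k\le b_{max}$ for every $i,k$. The heart of the matter is the base update. Setting $u_0=[(W_{j-1})_i^0]^{1/m}$, $u_1=[(W_{j-1})_i^1]^{1/m}$ and $\lambda=\nusig\Delta t/\Delta x^\sigma$, the update reads $[(W_j)_i^0]^{1/m}=u_0+\lambda\big(\varphi(u_1)-\varphi(u_0)\big)$. By the mean value theorem $\varphi(u_1)-\varphi(u_0)=\varphi'(\xi)(u_1-u_0)$ for some $\xi$ between $u_0$ and $u_1$, so the update takes the convex-combination form
\[
[(W_j)_i^0]^{1/m}=\big(1-\lambda\varphi'(\xi)\big)\,u_0+\lambda\varphi'(\xi)\,u_1 .
\]
Since $u_0,u_1\in[0,\|f\|_\infty]$ we have $\xi\in[0,\|f\|_\infty]$ and hence $0\le\varphi'(\xi)\le m\|f\|_\infty^{m-1}$; the CFL condition \eqref{constant} is imposed precisely so that $\lambda\,m\|f\|_\infty^{m-1}\le1$, whence the weight $\theta:=\lambda\varphi'(\xi)$ satisfies $0\le\theta\le1$. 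Thus $[(W_j)_i^0]^{1/m}$ is a convex combination of two numbers in $[0,\|f\|_\infty]$, giving $0\le(W_j)_i^0\le b_{max}$.

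Finally, the remaining level-$j$ nodes solve $L_\sigma^D[(W_j)_i^k]=0$ with boundary data equal to the just-bounded base values on $\Gamma_d$ and $0$ on $\Gamma_h$; a second application of the discrete maximum principle for $L_\sigma^D$ keeps them in $[0,b_{max}]$ and closes the induction. I expect the main obstacle to be twofold. First, the elliptic ingredient genuinely needs the discrete maximum principle for $L_\sigma^D$, i.e.\ a positivity/monotonicity property of the chosen stencil that the order condition \eqref{orderdisc} alone does not guarantee; this must be built into the construction of $L_\sigma^D$. Second, one must keep the nonlinear base update inside $[0,b_{max}]$, and the convex-combination rewriting above is precisely the device that exposes \eqref{constant} as the sharp threshold making the diagonal weight $1-\lambda\varphi'(\xi)$ nonnegative.
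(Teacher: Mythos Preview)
Your proof is correct and follows essentially the same route as the paper: induction on $j$, a discrete maximum principle for the elliptic operator $L_\sigma^D$ to handle the interior and lateral nodes, and the convex-combination rewriting of the base update via the mean value theorem together with the CFL condition \eqref{constant} to keep the $\Gamma_d$ values in range. The only difference is that where you assume an M-matrix/monotone structure for $L_\sigma^D$, the paper instead fixes the explicit five-point stencil \eqref{discext} and proves the needed maximum principle directly for it (Lemma~\ref{maxprinoper}); your observation that this property is not implied by the order condition \eqref{orderdisc} alone is exactly on point.
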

\begin{nota}
It is interesting to remark that $C(1,f) =1$, which means that we recover the expected restriction $\Delta t\leq \Delta x^\sigma$ for the linear case.
\end{nota}

We start with a lemma.
\begin{lemma}\label{maxprinoper}
Let $(W_j)_i^k$ be the solution to  Problem {\rm (\ref{numericmethod})} with $m\geq1$. Then, for a fixed time $t_j=j\Delta t$, we have
\[\max_{i,k}\{(W_j)_i^k\}=\max_{i}\{(W_j)_i^0\}.\]
that is, the maximum is always attained on the boundary $\Gamma_d$.
\end{lemma}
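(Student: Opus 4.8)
The plan is to show that, at a fixed time level $j$, the grid function $(W_j)_i^k$ inherits a discrete maximum principle from the elliptic operator $L_\sigma$, and then to use the imposed boundary data to pin the maximum onto $\Gamma_d$. The starting point is the structure of the discretization: a sensible finite-difference scheme $L_\sigma^D$ for $L_\sigma v=\nabla\cdot(y^{1-\sigma}\nabla v)$ expresses, at an interior node, $L_\sigma^D[v_i^k]$ as a combination of $v_i^k$ and its grid neighbors with \emph{nonnegative} off-diagonal coefficients and no zeroth-order term (a monotone, $M$-matrix type stencil). Since $L_\sigma^D$ annihilates constants, the interior equation $L_\sigma^D[(W_j)_i^k]=0$ can then be rewritten as
\[
(W_j)_i^k=\sum_{(l,p)} \lambda_{i,k}^{l,p}\,(W_j)_l^p,\qquad \lambda_{i,k}^{l,p}\ge 0,\quad \sum_{(l,p)}\lambda_{i,k}^{l,p}=1,
\]
where the sum runs over the grid neighbors $(l,p)$ of $(i,k)$; that is, each interior value is a convex combination of neighboring values.

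First I would record this averaging identity and extract from it the local maximum principle. Let $M:=\max_{i,k}(W_j)_i^k$, and suppose $M$ is attained at an interior node $(i,k)$ with $0<i<I$, $0<k<K$. Then $M$ is a convex combination of the neighboring values, each of which is $\le M$; equality in the convex combination forces every neighbor to equal $M$ as well. Propagating this equality along the connected grid, I reach a node on the boundary $\Gamma_d\cup\Gamma_h$ at which the value is still $M$. Hence the maximum over all nodes is always attained at some boundary node.

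Next I would split the boundary into its two pieces. On the $\Gamma_h$ nodes the scheme imposes $(W_j)_i^k=0$, so $\Gamma_h$ contributes exactly the value $0$ to the maximum. On $\Gamma_d$ the base values are nonnegative: for $j=0$ they equal $f^m(x_i)\ge 0$, and for $j\ge 1$ the explicit base update, under the step restriction \eqref{constant}, preserves nonnegativity (this is established in tandem with the induction on $j$ underlying Theorem \ref{maxprin}). Consequently $\max_i (W_j)_i^0\ge 0$, which is at least the value $0$ carried by $\Gamma_h$, so the maximum over the whole boundary is attained on $\Gamma_d$ and therefore $\max_{i,k}(W_j)_i^k=\max_i (W_j)_i^0$.

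The main obstacle is the very first step: verifying that the chosen discretization $L_\sigma^D$ (of order $a$, left unspecified at this point) genuinely has nonnegative off-diagonal coefficients, so that the averaging identity and hence the convex-combination property hold. This is delicate because the weight $y^{1-\sigma}$ either degenerates or blows up as $y\to 0$, and a naive higher-order stencil could introduce negative coefficients in the rows adjacent to the base $\Gamma_d$, destroying monotonicity. Reconciling the order-$a$ accuracy demanded in \eqref{orderdisc} with the monotonicity required for the discrete maximum principle is the crux on which the lemma rests; once it is secured, the propagation and boundary-splitting arguments above are routine.
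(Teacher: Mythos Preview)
Your argument is sound for any monotone ($M$-matrix type) discretization $L_\sigma^D$, and you correctly isolate the real difficulty: checking that the off-diagonal weights are nonnegative despite the singular/degenerate factor $y^{1-\sigma}$. The paper, by contrast, commits to an explicit lowest-order stencil,
\[
L_\sigma^DW_i^k=y_k^{1-\sigma}\frac{W_{i+1}^k+W_{i-1}^k+W^{k+1}_i+W^{k-1}_i-4W_i^k}{\Delta x^2}+\frac{(1-\sigma)}{y_k^\sigma}\frac{W_i^{k+1}-W_i^k}{\Delta x},
\]
and carries out the computation by hand. Rather than spreading the maximum to every neighbor and then to the whole boundary as you do, the paper propagates only in the vertical direction: from $L_\sigma^D W_i^k=0$ at a putative interior maximum and $W_{i\pm1}^k,\,W_i^{k-1}\le W_i^k$, one extracts $(k+1-\sigma)(W_i^{k+1}-W_i^k)\ge 0$, hence $W_i^{k+1}=W_i^k$, and iterates up to $W_i^K=0$. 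The key algebraic observation is $k+1-\sigma>0$ for interior $k\ge 1$, which is precisely the positivity of the one off-diagonal weight that the drift term $(1-\sigma)y^{-\sigma}\partial_y$ could have destroyed when $\sigma>1$. Your abstract convex-combination route is more portable (it applies verbatim to any monotone scheme, including the higher-order discretizations discussed later, provided they are built to respect a discrete maximum principle), whereas the paper's route is concrete and pinpoints exactly where the weight might have spoiled monotonicity but does not.

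One small caution: to pass from ``maximum attained on the boundary'' to ``maximum attained on $\Gamma_d$'' you invoke $(W_j)_i^0\ge 0$, which is part of Theorem~\ref{maxprin}---the very theorem that uses this lemma. This is not actually circular, because the base values at step $j$ are produced \emph{explicitly} by the update rule from step $j-1$, so their nonnegativity follows from the induction hypothesis before the lemma is invoked; but it is worth making that order of reasoning explicit. The paper sidesteps the issue by pushing the interior maximum straight up to $\Gamma_h$ and declaring a contradiction, though this tacitly also relies on the base data being nonnegative to rule out the degenerate case in which the global maximum is $0$ while $\max_i(W_j)_i^0<0$.
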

\begin{proof}
We recall that we are using the following extension operator,
\[
L_\sigma w(x,y)=\nabla\cdot(y^{1-\sigma}\nabla w )=y^{1-\sigma}\Delta w + (1-\sigma)y^{-\sigma}\frac{\partial w}{\partial y}.
\]
In two dimensions, the lowest order discretization we can consider is,
\begin{equation}\label{discext}
L_\sigma^DW_i^k=y_k^{1-\sigma}\frac{W_{i+1}^k+W_{i-1}^k+W^{k+1}_i+W^{k-1}_i-4W_i^k}{\Delta x^2}+\frac{(1-\sigma)}{y_k^\sigma}\frac{W_i^{k+1}-W_i^k}{\Delta x}.
\end{equation}
Assume that there exists an interior node $(i,k)$ such that
\[W_i^k=\max_{a,b}\{W_a^b\},\]
that is, the maximum is attained there. Note that, since we are talking about an interior node, $k\geq1$ and we can write $y_k=k\Delta x$.  Then equation $L_\sigma^D W_i^k=0$ obtained from (\ref{discext}) becomes,
\[
k^{1-\sigma}\frac{W_{i+1}^k+W_{i-1}^k+W^{k+1}_i+W^{k-1}_i-4W_i^k}{\Delta x^{1+\sigma}}+\frac{(1-\sigma)}{k^\sigma}\frac{W_i^{k+1}-W_i^k}{\Delta x^{1+\sigma}}=0.
\]
and so,
\begin{eqnarray*}
0&\leq& k^{1-\sigma}\frac{3W_{i}^k+W^{k+1}_i-4W_i^k}{\Delta x^{1+\sigma}}+\frac{(1-\sigma)}{k^\sigma}\frac{W_i^{k+1}-W_i^k}{\Delta x^{1+\sigma}}\\
&=&\frac{k}{k^{\sigma}}\frac{W^{k+1}_i-W_i^k}{\Delta x^{1+\sigma}}+\frac{(1-\sigma)}{k^\sigma}\frac{W_i^{k+1}-W_i^k}{\Delta x^{1+\sigma}}=[k+1-\sigma]\frac{W_i^{k+1}-W_i^k}{k^\sigma\Delta x^{1+\sigma}}.
\end{eqnarray*}
We recall that $k+1-\sigma >0$, so we conclude that $W_i^{k+1}\geq W_i^k$. Since $W_i^k$ is the maximum, we get that  $W_i^{k+1}= W_i^k$. At this point we proceed by induction on $k$ to get
\[W_i^k=W_i^{k+1}=W_i^{k+2}=\cdots=W_i^K.\]
But by hypothesis, $W_i^K=0$, so we get a contradiction.
\end{proof}

\begin{proof}[Proof of Theorem \ref{maxprin} ]
Using the result of the previous lemma, we only need to prove the maximum principle at the boundary $\Gamma_d$ nodes.
We will do the proof by induction on each time step. It is trivial that
\[0\leq (W_0)_i^k\leq b_{max}.\]
Now we assume that
\[0\leq (W_{j-1})_i^k\leq b_{max}.\]
Then,
\[[(W_j)_i^0]^{1/m}=\nusig\frac{\Delta t}{\Delta x^\sigma} \big({(W_{j-1})}_i^1-{(W_{j-1})}_i^0\big)  +[(W_{j-1})_i^0]^{1/m},\]
If we change variables to $\displaystyle(U_j)_i^k= [(W_j)_i^k]^{1/m} $ and use Mean Value Theorem , we obtain, for some $\xi \in [(U_{j-1})_i^1,(U_{j-1})_i^0]$ that
\begin{equation}\label{recu2}
(U_j)_i^0=\varphi'(\xi)\nusig\frac{\Delta t}{\Delta x^\sigma} (U_{j-1})_i^1 +\bigg[1-\varphi'(\xi)\nusig\frac{\Delta t}{\Delta x^\sigma}\bigg](U_{j-1})_i^0.
\end{equation}
At this point , thanks to our induction hypothesis and the value of the constant (\ref{constant}), it follows that $\displaystyle \varphi'(\xi)\nusig\frac{\Delta t}{\Delta x^\sigma} \leq 1$ and therefore
\begin{equation}
|(U_j)_i^0|\leq\varphi'(\xi)\nusig\frac{\Delta t}{\Delta x^\sigma} (b_{max})^{1/m} +\bigg[1-\varphi'(\xi)\nusig\frac{\Delta t}{\Delta x^\sigma}\bigg] (b_{max})^{1/m}
=(b_{max})^{1/m}.
\end{equation}
The same argument holds for $ (U_j)_i^0\geq 0$.
\end{proof}

\begin{coro}\label{existence}
If $\Delta t\leq C(m,f) \Delta x^\sigma$, then Problem {\rm (\ref{numericmethod})} has a unique solution.
\end{coro}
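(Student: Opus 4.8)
The plan is to proceed by induction on the time index $j$, exploiting the fact that the scheme (\ref{numericmethod}) decouples into an \emph{explicit} boundary update on $\Gamma_d$ followed by a \emph{linear} interior solve. Concretely, given the values $(W_{j-1})_i^k$ at the previous step, the second equation of (\ref{numericmethod}) determines the base values $(W_j)_i^0$ directly, with no equation to solve; once these are known, together with the homogeneous data on $\Gamma_h$, the first equation $L_\sigma^D[(W_j)_i^k]=0$ on the interior nodes $0<i<I$, $0<k<K$ becomes a square linear system for the remaining unknowns. Thus it suffices to check two things: that the explicit update is unambiguously defined, and that the interior linear system is invertible. The base case $j=0$ is handled identically, the prescribed datum $f^m(x_i)$ on $\Gamma_d$ playing the role of the explicitly computed boundary values.

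First I would address the explicit step. The right-hand side of the update is an $m$-th power, which for non-integer $m$ is only real when its base is nonnegative. Here the maximum principle of Theorem \ref{maxprin} is essential: under the hypothesis $\Delta t\leq C(m,f)\Delta x^\sigma$ it guarantees $0\leq (W_{j-1})_i^k\leq b_{max}$, and the computation (\ref{recu2}) shows that the base $(U_j)_i^0=[(W_j)_i^0]^{1/m}$ is a convex combination of the nonnegative quantities $(U_{j-1})_i^1$ and $(U_{j-1})_i^0$, hence nonnegative. Consequently the maps $u\mapsto u^{1/m}$ and $v\mapsto v^m$ act on the range $[0,b_{max}]$ where they are single-valued, and $(W_j)_i^0$ is a uniquely determined real number.

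The heart of the matter is the invertibility of the interior linear solve, and here I would argue by injectivity. Since the number of interior unknowns equals the number of interior equations, it is enough to show that the homogeneous problem, $L_\sigma^D W_i^k=0$ at every interior node with $W=0$ on all of $\Gamma=\Gamma_d\cup\Gamma_h$, admits only $W\equiv0$. For this I would invoke Lemma \ref{maxprinoper}, whose proof uses solely the interior equation and the vanishing of $W$ on $\Gamma_h$; applied to such a $W$ it forces $\max_{i,k}W_i^k=\max_i W_i^0=0$, and applied to $-W$ (which satisfies the same equation and boundary condition) it forces $\min_{i,k}W_i^k=0$. Hence $W\equiv0$, the system matrix is injective and therefore bijective, and the interior values $(W_j)_i^k$ are uniquely determined. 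Equivalently, one may verify directly that the matrix of $L_\sigma^D$ is an irreducibly diagonally dominant M-matrix: the key sign computation, already carried out in Lemma \ref{maxprinoper}, is that the coefficient of $W_i^{k+1}$ equals $(k+1-\sigma)/(k^\sigma\Delta x^{1+\sigma})>0$ for $k\geq1$ and $\sigma\in(0,2)$, so all off-diagonal entries are positive, the interior row sums vanish, and the rows adjacent to the boundary are strictly dominant.

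I expect the interior solve to be the only real obstacle; the subtlety is that the discretized operator is only \emph{weakly} diagonally dominant (the interior row sums are exactly zero, reflecting the conservative structure of $L_\sigma$), so invertibility cannot be read off from strict dominance alone and must be extracted from the boundary coupling, precisely as the maximum-principle argument does. Combining the two steps, an induction on $j=0,1,\dots,J$ then yields existence and uniqueness of $(W_j)_i^k$ for all $i,k,j$.
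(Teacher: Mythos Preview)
Your proposal is correct and follows essentially the same route as the paper: both arguments reduce uniqueness (and hence existence for the square linear system) to the discrete maximum principle of Lemma~\ref{maxprinoper}, with your induction on $j$ making precise what the paper only sketches (in particular, you handle the well-definedness of the $m$-th power in the explicit step, which the paper does not mention). The M-matrix remark is a harmless reformulation of the same mechanism.
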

 We are a bit sketchy with these rather standard proofs of Theorem \ref{maxprin} and Corollary  \ref{existence}, the reader can consult a more detailed exposition in in the paper \cite{teso} where
 the case $\sigma=1$ is covered. Basically, two solutions $(V_j)_i^k$ and $(W_j)_i^k$ with the same initial condition are considered. Then $(Z_j)_i^k= (V_j)_i^k-(W_j)_i^k$ is also a solution with zero initial condition. By the maximum principle we have $(Z_j)_i^k=0$, hence $ (V_j)_i^k=(W_j)_i^k$. Since, for a linear system of equations with the same number of unknowns and equations, existence is equivalent to uniqueness, the required result is proved.


\subsection{Error of the numerical method and convergence to the solution}\label{errors}
Since we are originally interested in the values of the numerical solution at the boundary, $(U_j)_i^k=[(W_j)_i^k]^{1/m}$, we have two options in order to define the {\sl error of the numerical method}. The first option is through the $w$ variables:
\begin{equation}\label{error2}
(f_j)_i^k=w(x_i,y_k,t_j)-(W_j)_i^k, \ \ \ F_j=\max_{i,k}|(f_j)_i^k|,
\end{equation}
and the second one through the $u$'s:
\begin{equation}\label{error1}
(e_j)_i^k=u(x_i,y_k,t_j)-(U_j)_i^k, \ \ \ E_j=\max_{i,k}|(e_j)_i^k|.
\end{equation}
Anyway, if we are able to control (\ref{error1}) we have also a control of (\ref{error2}) because $\varphi'(x)$ is locally bounded and
\begin{eqnarray*}
(f_j)_i^k&=&(w_j)_i^k-(W_j)_i^k=[(u_j)_i^j]^m-[(U_j)_i^k]^m\\
&=&\big[(u_j)_i^k-(U_j)_i^k\big]\varphi'(\xi)\\
&=& (e_j)_i^k\varphi'(\xi),
\end{eqnarray*}
for some $\xi$ in the interval $[(u_j)_i^k,(U_j)_i^k]$. This implies that $|(f_j)_i^k|\leq C(m,f)|(e_j)_i^k|$, and therefore $F_j\leq C(m,f) E_j$.

\begin{teor}\label{convergence}
Let $w$ be the classical solution to Problem {\rm (\ref{problemextbdd})} and $(W_j)_i^k$ be the solution to System {\rm (\ref{numericmethod})} with $m\geq 1$, $\sigma\in(0,2)$ and a discretization $L_\sigma^D$ with order $a$ (as in Subsection {\rm 4.1}). Assume that
\begin{equation}
\Delta t\leq C(m,f)\Delta x^\sigma.
\end{equation}
Then,
\begin{equation}
F_j=O(\Delta t+ \Delta x^{2-\sigma}+\frac{\Delta x^a}{\Delta t}),
\end{equation}
for $j=1,. . .,J$. Therefore, if $\Delta x^a/\Delta t\to 0$, then  the numerical solution $(W_j)_i^k$ converges to the theoretical solution $w$ as $\Delta x, \Delta t \to 0$.
\end{teor}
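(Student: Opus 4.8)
The plan is to derive a closed recursion in the time index $j$ for the error, driven by the local truncation error of Theorem \ref{localtruncerror} and stabilized by the discrete maximum principle of Theorem \ref{maxprin}. Inserting the exact grid values $(w_j)_i^k$ into the scheme \eqref{numericmethod} produces exactly the defects $(\tau_{j-1})_i^k$, so the error $f_j=w_j-W_j$ solves the \emph{same} discrete problem as $W_j$, but with these defects as right-hand side. I would split the analysis along the two-part structure of the scheme: the elliptic solve in the interior and the explicit update on $\Gamma_d$.

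First I would handle the interior. Subtracting the two discrete elliptic equations, the error satisfies $L_\sigma^D (f_j)_i^k=(\tau_{j-1})_i^k=O(\Delta x^a)$ at the interior nodes, with boundary values $(f_j)_i^0$ on $\Gamma_d$ and $0$ on $\Gamma_h$. The rearrangement of the stencil carried out in the proof of Lemma \ref{maxprinoper} shows that $L_\sigma^D$ writes each interior value as a convex combination of its neighbours, so it obeys a discrete maximum principle. Decomposing $f_j$ into its discrete $\sigma$-harmonic part (with the given boundary data) and the response to the $O(\Delta x^a)$ interior defect, and bounding the latter by a fixed barrier for $L_\sigma^D$, I obtain $F_j\le \max_i|(f_j)_i^0|+C\Delta x^a$ and, where the classical solution keeps $\varphi'$ bounded away from $0$, the analogous comparison $\max_{i,k}|(e_j)_i^k|\le \max_i|(e_j)_i^0|+C\Delta x^a$. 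This reduces the whole estimate to controlling the error on $\Gamma_d$.

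Next comes the boundary recursion, the heart of the proof. Writing the explicit update in the variables $U=W^{1/m}$, subtracting the exact and numerical relations and using the mean value theorem for $\varphi(s)=s^m$ gives
\[
(e_j)_i^0=\Big[1-\varphi'(\xi_0)\nusig\tfrac{\Delta t}{\Delta x^\sigma}\Big](e_{j-1})_i^0+\varphi'(\xi_1)\nusig\tfrac{\Delta t}{\Delta x^\sigma}(e_{j-1})_i^1-(\tau_{j-1})_i^0 .
\]
Under the CFL hypothesis $\Delta t\le C(m,f)\Delta x^\sigma$, the bounds $0\le (W_j)_i^k\le b_{max}$ of Theorem \ref{maxprin} force $\varphi'(\xi_0)\nusig\frac{\Delta t}{\Delta x^\sigma}\in[0,1]$, so the two leading terms form a near-convex combination. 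Estimating the coefficient mismatch by $|\varphi'(\xi_1)-\varphi'(\xi_0)|\le\|\varphi''\|_\infty|\xi_1-\xi_0|$ and invoking the regularity of $w$ near $y=0$ — precisely the $O(y^\sigma)$ boundary behaviour underlying Theorem \ref{orderdiscretization}, which yields $|\xi_1-\xi_0|=O(\Delta x^\sigma)+O(E_{j-1})$ — together with the interior estimate above, I can bound the combination by $\big(1+C\Delta t\big)\max_{i,k}|(e_{j-1})_i^k|$ and so obtain the stability inequality $E_j\le(1+C\Delta t)E_{j-1}+\Lambda$, i.e. an amplification factor of $1+O(\Delta t)$ rather than one bounded away from $1$.

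Finally, a discrete Gronwall argument applied to $E_j\le(1+C\Delta t)E_{j-1}+\Lambda$ over $j=1,\dots,J$ with $J\Delta t=T$ yields $E_J\le e^{CT}\big(E_0+\Lambda/(C\Delta t)\big)$. Since $E_0=O(\Delta x^a)$ (from the single initial elliptic solve) and, by Theorem \ref{localtruncerror}, $\Lambda/\Delta t=O(\Delta t+\Delta x^{2-\sigma}+\Delta x^a/\Delta t)$, the relation $F_j\le C(m,f)E_j$ gives the stated rate. I expect the genuine obstacle to be exactly this stability step: keeping the amplification at $1+O(\Delta t)$ requires simultaneously exploiting the convex-combination structure coming from the CFL condition and controlling the porous-medium nonlinearity through $\varphi'(\xi_1)-\varphi'(\xi_0)$. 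The degeneracy of $\varphi'$ where $u$ vanishes is the delicate point — it obstructs the passage from the interior $F$-estimate to the interior $E$-estimate needed to close the recursion — and it is the reason the conclusion is phrased for $F_j$ and recovered through $F_j\le C(m,f)E_j$.
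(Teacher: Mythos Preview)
Your proposal follows essentially the same route as the paper: a barrier argument (Lemma \ref{errorpropagation}) reduces the error to the $\Gamma_d$ nodes, the same boundary recursion for $(e_j)_i^0$ is derived via the mean value theorem, the CFL condition makes the coefficients nonnegative, the mismatch $\varphi'(\xi_1)-\varphi'(\xi_0)$ is shown to be $O(\Delta x^\sigma)$ so that the amplification factor is $1+C\Delta t$, and discrete Gronwall finishes. The only refinement worth noting is that the paper does not bound $|\varphi'(\xi_1)-\varphi'(\xi_0)|$ through $\|\varphi''\|_\infty$ (which is infinite at the origin for $1<m<2$) but instead observes that the divided difference $g(x,y)=(x^m-y^m)/(x-y)$ is $C^1$ on $(0,\infty)^2$ and that $\varphi'(\xi_l)=g\big((u_{j-1})_i^l,(U_{j-1})_i^l\big)$, which yields the needed $O(\Delta x^\sigma)$ bound directly from the assumed regularity $|(u_{j-1})_i^1-(u_{j-1})_i^0|,\ |(U_{j-1})_i^1-(U_{j-1})_i^0|\le K\Delta x^\sigma$.
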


To prove this theorem we will first study how the errors are propagated from the boundary to the interior on each time step. We consider the solution of the  numerical scheme $(W_j)_i^k$ and the theoretical one $(w_j)_i^k$ on time $t=j\Delta t$, and we will call them for simplicity $W_i^k$ and $w_i^k$. Moreover, in the following lemma we will use the notation $h=\Delta x$.
\begin{lemma}\label{errorpropagation}
Let $L_h$ be a discretization of order $a$ of the operator
\[Lw=-\nabla \cdot (y^{1-\sigma}\nabla w)\]
which satisfies the maximum principle as in Lemma {\rm \ref{maxprinoper}}. Then, for some constant $K>0$,
\[\max_{i,k}|w_i^k- W_i^k| \leq \max_{i}|w_i^0- W_i^0| + Kh^a.\]
This means that the error is propagated to the interior of the domain as $O(h^a)$.
\end{lemma}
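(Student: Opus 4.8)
The plan is to treat the error $Z_i^k := w_i^k - W_i^k$ as the solution of a discrete elliptic problem with a small right-hand side and to dominate it by an explicit barrier. First I would record the data of this problem. Since $W$ solves $L_h W_i^k = 0$ at interior nodes, and the theoretical solution is $\sigma$-harmonic so that $Lw(x_i,y_k) = -L_\sigma w(x_i,y_k) = 0$, the consistency estimate \eqref{orderdisc} gives, at every interior node,
$$L_h Z_i^k = L_h w_i^k - L_h W_i^k = Lw(x_i,y_k) + O(h^a) = \tau_i^k, \qquad |\tau_i^k|\le K_0 h^a,$$
for some constant $K_0>0$. On $\Gamma_h$ both $w$ and $W$ vanish, so $Z_i^k = 0$ there, while on $\Gamma_d$ one has $Z_i^0 = w_i^0 - W_i^0$, whose modulus is bounded by $M:=\max_i|w_i^0 - W_i^0|$.

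Next I would build a continuous barrier $\psi\ge 0$ on $\overline{\Omega}$ satisfying $L\psi\ge 1$. Because the weight $y^{1-\sigma}$ degenerates or blows up at $y=0$, the choice must depend on $\sigma$: for $\sigma< 1$ the linear function $\psi(x,y)=Y-y$ gives $L\psi=(1-\sigma)y^{-\sigma}$, bounded below by $(1-\sigma)Y^{-\sigma}>0$, whereas for $\sigma\ge 1$ the function $\psi(x,y)=X^2-x^2$ gives $L\psi=2y^{1-\sigma}\ge 2Y^{1-\sigma}>0$; after dividing by the respective positive lower bound we may assume $L\psi\ge 1$ and $0\le\psi\le C_\Omega$. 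Setting $\Phi_i^k := K_0 h^a\,\psi(x_i,y_k)$ and applying \eqref{orderdisc} once more to the smooth function $\psi$ yields $L_h\Phi_i^k = K_0 h^a\bigl(L\psi + O(h^a)\bigr)\ge K_0 h^a$ at interior nodes for $h$ small, so $\Phi$ is a discrete supersolution dominating the truncation residual. For the explicit scheme \eqref{discext} these two barriers are in fact reproduced exactly (a forward difference of an affine-in-$y$ function and a centered difference of a quadratic-in-$x$ function are exact), so no consistency loss occurs near the degenerate boundary.

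Then I would run the discrete comparison principle on $Z-\Phi$ and on $-Z-\Phi$. In both cases the interior residual is nonpositive; for instance $L_h(Z-\Phi)=\tau-L_h\Phi\le K_0h^a-K_0h^a=0$. The monotone structure of $L_h$ exploited in Lemma \ref{maxprinoper} — all off-diagonal coefficients positive, the diagonal being their negative sum — shows by exactly the same upward-propagation argument that such a discrete subsolution attains its maximum on $\partial\Omega=\Gamma_d\cup\Gamma_h$. On $\Gamma_h$ we have $Z-\Phi=-\Phi\le 0$ and on $\Gamma_d$ we have $Z-\Phi\le Z_i^0\le M$, hence $Z_i^k-\Phi_i^k\le M$ everywhere, i.e. $Z_i^k\le M+K_0 h^a\,\|\psi\|_\infty$. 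The same reasoning applied to $-Z$ gives the opposite bound, and combining them produces $\max_{i,k}|w_i^k-W_i^k|\le M + Kh^a$ with $K=K_0\sup_{\overline{\Omega}}\psi$, which is the claim.

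I expect the main obstacle to be the construction of a barrier that is simultaneously nonnegative, keeps $L\psi$ bounded below by a positive constant up to the line $y=0$ despite the degeneracy or singularity of $y^{1-\sigma}$, and on which $L_h$ is consistent with order $a$ uniformly in $k$, including the rows adjacent to $\Gamma_d$. The $\sigma$-dependent split above settles the first two points, and the exactness of $L_h$ on the chosen barriers settles the third for the scheme \eqref{discext}; for a general order-$a$ discretization one must instead invoke \eqref{orderdisc} uniformly up to the degenerate boundary.
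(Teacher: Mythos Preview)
Your argument is correct and follows the same barrier-plus-maximum-principle strategy as the paper: build a smooth strict supersolution $\psi$ of $L$, scale it by $h^a$, and compare $e\pm\Phi$ via the discrete maximum principle inherited from Lemma~\ref{maxprinoper}. The paper first splits off the boundary contribution by linearity and bounds only the zero-boundary-data piece, whereas you treat boundary data and residual together; this is a cosmetic difference. The one substantive variation is the barrier for $\sigma\in[1,2)$: the paper takes $f(x,y)=Y^2-y^2$, giving $Lf=2(2-\sigma)y^{1-\sigma}\ge 2(2-\sigma)Y^{1-\sigma}$, while you take $\psi(x,y)=X^2-x^2$, giving $L\psi=2y^{1-\sigma}\ge 2Y^{1-\sigma}$. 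Both are valid; your choice has the pleasant feature that the concrete scheme \eqref{discext} reproduces it exactly (centered second differences are exact on quadratics in $x$, and the $y$-differences vanish), which sidesteps any worry about consistency loss near the degenerate line $y=0$. For $\sigma<1$ both proofs use $Y-y$.
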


\begin{proof} Note that this a calculation in the space variables for fixed time. Since the problem is linear we may split it in two contributions, and consider only the nontrivial part of the influence of the right-hand side of the equation.

\noindent (i) Consider a function $f\in C^\infty (\Omega)$ such that $f\geq0$.  Assume that $f$ is a strict supersolution of $L$ in the sense that there exists a constant $D>0$ such that
\begin{equation}
L[f]\geq D>0.
\end{equation}
Since $f\in C^\infty(\Omega)$, also $w\in C^\infty(\Omega)$ and $L_h$ is a discretization of order $a$, we have
\[
\left|L[f]-L_h[f] \right|\leq C_1h^a, \qquad \left|L_h[w] \right|\leq C_2h^a,
\]
for some constants $C_1=C_1(f)>0$ and $C_2=C_2(w)>0$.  In particular,
\begin{equation}\label{orderdiscf}
L_h[f] \geq L[f]-C_1h^a.
\end{equation}
Consider also the function $\displaystyle g=\frac{3C}{D}h^a f$ with  $C=\max\{C_1,C_2\}$.. Then $g$ is also a super solution such that
\[L[g]=\frac{3C}{D}h^a L[f] \geq \frac{3C}{D}h^a D=3C h^a>0. \]
By (\ref{orderdiscf}) we have,

\[L_h[g]\geq L[g]-Ch^a\geq 3Ch^a-Ch^a=2Ch^a.\]

Consider now $e_i^k=w_i^k -W_i^k$ as usual. In this way,
\begin{eqnarray*}
L_h[e_i^k-g]&=&L_h[e_i^k]-L_h[g]=L_h[w_i^k]- L_h[W_i^k]-L_h[g]\\
&\leq& C h^a - 2Ch^a=-Ch^a<0,
\end{eqnarray*}
where we have use that $L_h[W_i^k]=0$ by hypothesis and $L_h[w_i^k]=O(h^a)$ because $L_h$ is a discretization of order $a$. Then we can conclude by the maximum principle of the discretized operator $L_h$ that
\[e_i^k\leq g\leq K h^a\]
where $K=\frac{3C}{D}||f||_\infty$. A similar argument is used to obtain $e_i^k \geq -Kh^a$ and so on $|e_i^k| \leq Kh^a$ for all the interior nodes.

\medskip

\noindent (ii) We still have to find an explicit positive strict supersolution $f\in C^\infty(\Omega)$ of the operator $L=-\nabla \cdot (y^{1-\sigma}\nabla w)$.

 In case $\sigma \in (0,1)$ we consider the function $f(x,y)=Y-y$, where $Y$ is the vertical length the domain $\Omega=[-X,X]\times[0,Y]$. It is clear that $f\geq 0$ and $f\in C^{\infty}(\Omega)$ so it is under the regularity assumptions of our lemma. Then
\[
\nabla f=[0,-1], \quad y^{1-\sigma}\nabla f=[0,-y^{1-\sigma}], \quad  -\nabla\cdot\left(y^{1-\sigma}\nabla f\right)=(1-\sigma)\frac{1}{y^{\sigma}}.
\]
Since $y\in (0,Y)$, we have that $-\nabla\cdot\left(y^{1-\sigma}\nabla f\right) \geq (1-\sigma)\frac{1}{Y^{\sigma}}>0$.

For $\sigma \in [1,2)$ take $f(x,y)=Y^2-y^2$.  Then
\[\nabla f=[0,-2y], \quad y^{1-\sigma}\nabla f=[0,-2y^{2-\sigma}], \quad  -\nabla\cdot\left(y^{1-\sigma}\nabla f\right)=2(2-\sigma)\frac{1}{y^{\sigma-1}}.\]
and so $-\nabla\cdot\left(y^{1-\sigma}\nabla f\right) \geq2(2-\sigma)Y^{1-\sigma}>0$.
\end{proof}
\normalcolor

\begin{proof}[Proof of Theorem \ref{convergence}]
As in the local truncation error, the choice of the boundary conditions on the lateral boundary $\Gamma_h$ for our numerical method gives us zero error  there.

Lets us denote $(E_B)_j$ and $(E_I)_j$ the maximum errors in the boundary nodes and in the interior nodes at time $t_j=j\Delta t$, that is
\[
(E_B)_j=\max_{0\leq i\leq I}|(e_j)_i^0|.
\]
 As a consequence of Lemma \ref{errorpropagation} we have
\[
E_j=(E_B)_j+ O(\Delta x^a).
\]
If $0\leq i\leq I$, we have the following equations in terms of $u$ and $(U_j)_i^k$,
\[
(\tau_{j-1})_i^0=\nusig\frac{\Delta t}{\Delta x^\sigma} \big[[{(u_{j-1})}_i^1]^{m}-[{(u_{j-1})}_i^0]^{m}\big]  +(u_{j-1})_i^0-(u_{j})_i^0,
\]
\[(U_j)_i^0=\nusig \frac{\Delta t}{\Delta x^\sigma} \big[[{(U_{j-1})}_i^1]^{m}-[{(U_{j-1})}_i^0]^{m}\big]  +(U_{j-1})_i^0.
\]
Subtracting them, and using the Mean Value Theorem we get, for some $\xi_0\in [{(u_{j-1})}_i^0,{(U_{j-1})}_i^0\big]$ and $\xi_1\in [{(u_{j-1})}_i^1,{(U_{j-1})}_i^1\big]$, that
\[(e_j)_i^0=\nusig\frac{\Delta t}{\Delta x^\sigma}\varphi'(\xi_1){(e_{j-1})}_i^1  +\bigg[1-\nusig\frac{\Delta t}{\Delta x^\sigma}\varphi'(\xi_0)\bigg](e_{j-1})_i^0-(\tau_{j-1})_i^0.
\]
By assumption, all the coefficients that come with $(e_{j-1})_i^k$ are positive, therefore
\begin{eqnarray}\label{eqerror2}
|(e_j)_i^0|\leq\nusig\frac{\Delta t}{\Delta x^\sigma}\varphi'(\xi_1)E_{j-1}  +\bigg[1-\nusig\frac{\Delta t}{\Delta x^\sigma}\varphi'(\xi_0)\bigg]E_{j-1}+ \Lambda.
\end{eqnarray}
We now need  to control  the difference between $\varphi'(\xi_1)$ and $\varphi'(\xi_0)$. Assuming enough regularity of the solution $u$, there exists a constant $K\geq0$ such that
\[|(u_j)_i^1-(u_j)_i^0|\leq K\Delta x^\sigma\ \  \mbox{ and }\ \ |(U_j)_i^1-(U_j)_i^0|\leq K\Delta x^\sigma.\]
end then \ $|\varphi'(\xi_1)- \varphi'(\xi_0)|\leq R\Delta x^\sigma$, where $R\geq0$ is a constant depending only on $m$, $K$, $\sigma$  and $b_{max}$. The proof is only based in the idea of that  the function
\[
g(x,y)=\frac{x^m-y^m}{x-y}.
\]
is $C^1((0,\infty)\times (0\times \infty))$, and $\varphi'(\xi_0)=g((u_{j})_i^0,(U_{j})_i^0)$ and $\varphi'(\xi_1)=g((u_{j})_i^1,(U_{j})_i^1)$.
 Then from (\ref{eqerror2}) we obtain
\begin{eqnarray*}
|(e_j)_i^0| &\leq&\nusig\frac{\Delta t}{\Delta x^\sigma}\bigg[\varphi'(\xi_0)+ R \Delta x^\sigma \bigg]E_{j-1}  +\bigg[1-\nusig\frac{\Delta t}{\Delta x^\sigma}\varphi'(\xi_0)\bigg]E_{j-1}+\Lambda\\
&\leq& (1+D\Delta t)E_{j-1}+\Lambda.
\end{eqnarray*}

Remember also that we have
$$\displaystyle E_j\leq \max_{0<i<I} |(e_j)_i^0|+O(\Delta x^a)$$
 and  $\Lambda =O(\Delta x^a+\Delta t (\Delta t+\Delta x^{2-\sigma}))$.  Then (recalling that $J=T/\Delta t$), we have the following recurrence equation for the error in terms in $J$ ,
\begin{equation}\label{errorrec2}
E_j\leq (1+C\frac{1}{J})E_{j-1}+\Lambda,
\end{equation}
for some constant $C>0$. Of course, it is enough to bound $E_{J}$ to ensure the convergence of the method. In this way,
\begin{equation}
E_J\leq(1+C\frac{1}{J})\bigg[E_{J-1}+\Lambda\bigg]
\leq . . . \leq (1+C\frac{1}{J})^J\bigg[E_0+L J \Lambda\bigg].
\end{equation}

But $(1+C\frac{1}{J})^J\leq e^{C}$, $J\Delta t=T$ and $E_0\leq D \Delta x^{a} $ for some $D>0$, so
\[E_J\leq \frac{C}{\Delta t}\big(\Delta t(\Delta t +\Delta x^{2-\sigma})+\Delta x^a\big),\]
that is
\[E_J=O(\Delta t+\Delta x^{2-\sigma}+\frac{\Delta x^a}{\Delta t}).\]
\end{proof}

\subsection{ Practical application}

At this point, the best we can expect is to have an error of the form
\begin{equation}\label{errores}
E_j=O(\Delta t +\Delta x^{2-\sigma}).
\end{equation}
Let us look for a discretization of the extension operator which will not include any extra error to the scheme. Relation (\ref{errores}) gives two possible conditions to impose on $a$:
\begin{enumerate}
\item $\displaystyle\frac{\Delta x^a}{\Delta t} \leq \Delta t$.
\item $\displaystyle\frac{\Delta x^a}{\Delta t} \leq \Delta x^{2-\sigma}$.
\end{enumerate}
Note that both inequalities are only required to be true up to some constant. We need also these two conditions to be compatible with $\Delta t \leq C(m,f) \Delta x^{\sigma} $.
\begin{nota}
The function $g(\sigma)=\Delta x^\sigma$ is monotone decreasing for small fixed $\Delta x$, that is, is $\alpha\geq \beta>0$ then $\Delta x^\alpha\leq \Delta x^{\beta}$.
\end{nota}

\underline{Condition 1:} $\Delta x^{\frac{a}{2}}\leq \Delta t$.

We need $a$ for which there exists a $\Delta t$ such that
\begin{equation}
\Delta x^{\frac{a}{2}}\leq \Delta t\leq \Delta x^\sigma,
\end{equation}
that is, $a/2\geq \sigma$. So if
\fbox{$a\geq 2\sigma$}
this condition is compatible.

\underline{Condition 2:} $\Delta x^{a+\sigma-2}\leq \Delta t$.

We need $a$ for which there exists a $\Delta t$ such that
\begin{equation}
\Delta x^{a+\sigma-2}\leq \Delta t\leq \Delta x^\sigma,
\end{equation}
that is, $a+\sigma-2\geq \sigma$. So if \fbox{$ a\geq 2$} this condition is compatible.

We choose the less restrictive for each case, that is, if $\sigma \in (0,1]$ then $a\geq 2\sigma$ and if $\sigma \in (1,2)$ then $a\geq 2$. Summing up, we have

\begin{coro}\label{coroll4.7}
Under the assumptions of Theorem {\rm \ref{convergence}}, we also assume  that $\sigma \in (0,1]$ and
$$
D\Delta x^{\frac{a}{2}}\leq \Delta t\leq C(m,f) \Delta x^\sigma \quad \mbox{ for some} \ a\geq 2\sigma,
$$
or $\sigma \in (1,2)$ and
$$
D\Delta x^{a+\sigma-2}\leq \Delta t\leq C(m,f) \Delta x^\sigma
\quad \mbox{ for some}  \ a\geq 2\,,
$$
where  $D>0$ is a constant. Then,  $F_J=O(\Delta t+\Delta x^{2-\sigma}).$
\end{coro}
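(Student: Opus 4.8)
The plan is to read off the result directly from Theorem \ref{convergence}, whose conclusion states that under the standing hypothesis $\Delta t \leq C(m,f)\Delta x^\sigma$ one has
\[
F_J = O\left(\Delta t + \Delta x^{2-\sigma} + \frac{\Delta x^a}{\Delta t}\right).
\]
To upgrade this to $F_J = O(\Delta t + \Delta x^{2-\sigma})$ it suffices to show that, under each of the two sets of hypotheses, the extra term $\Delta x^a/\Delta t$ is absorbed by one of the two surviving terms. So the whole argument reduces to two short algebraic verifications, one per regime of $\sigma$.

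First I would treat the case $\sigma \in (0,1]$ with $a \geq 2\sigma$ and the lower bound $D\Delta x^{a/2} \leq \Delta t$. Squaring and dividing by $\Delta t$ yields $\Delta x^a/\Delta t \leq D^{-2}\Delta t$, so the extra term is $O(\Delta t)$ and is swallowed by the first term. Next I would treat $\sigma \in (1,2)$ with $a \geq 2$ and the lower bound $D\Delta x^{a+\sigma-2} \leq \Delta t$. Multiplying by $\Delta x^{2-\sigma}$ and dividing by $\Delta t$ yields $\Delta x^a/\Delta t \leq D^{-1}\Delta x^{2-\sigma}$, so the extra term is $O(\Delta x^{2-\sigma})$ and is swallowed by the second term. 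In either case the claimed rate $F_J = O(\Delta t + \Delta x^{2-\sigma})$ follows at once.

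Before invoking Theorem \ref{convergence} I would check that each hypothesis is consistent, i.e. that the prescribed interval for $\Delta t$ is nonempty for all small $\Delta x$. In the first case this amounts to $D\Delta x^{a/2} \leq C(m,f)\Delta x^\sigma$, i.e. $\Delta x^{a/2-\sigma} \leq C(m,f)/D$; since $a \geq 2\sigma$ the exponent $a/2-\sigma$ is nonnegative, so by the monotonicity remark preceding the corollary the left-hand side is bounded (indeed tends to $0$) as $\Delta x \to 0$, and the inequality holds for $\Delta x$ small. In the second case consistency reduces to $\Delta x^{a-2} \leq C(m,f)/D$, which again holds for small $\Delta x$ because $a \geq 2$. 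This simultaneously guarantees that the upper bound $\Delta t \leq C(m,f)\Delta x^\sigma$ required by Theorem \ref{convergence} coexists with the imposed lower bound on $\Delta t$.

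The proof presents no real obstacle: the only point requiring care is the bookkeeping with the exponents of $\Delta x$ and $\Delta t$, and the observation that the threshold $a \geq 2\sigma$ (resp. $a \geq 2$) is precisely what makes the lower and upper bounds on $\Delta t$ compatible while killing the $\Delta x^a/\Delta t$ contribution. All the substantive work has been done in Theorem \ref{convergence}; the corollary merely isolates the two natural regimes in which the discretization order $a$ of $L_\sigma^D$ is large enough that the spatial discretization introduces no error beyond the intrinsic $\Delta t + \Delta x^{2-\sigma}$ rate.
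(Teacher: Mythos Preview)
Your proposal is correct and follows essentially the same route as the paper: you invoke Theorem \ref{convergence} and then verify that, under the stated lower bounds on $\Delta t$, the residual term $\Delta x^a/\Delta t$ is dominated by $\Delta t$ in the first regime and by $\Delta x^{2-\sigma}$ in the second, exactly as the paper's Conditions~1 and~2 do; your consistency check of the interval for $\Delta t$ is likewise the paper's compatibility argument $a\geq 2\sigma$ (resp.\ $a\geq 2$).
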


\section{Optimal rate of convergence}
Once convergence is established, we could  ask ourselves which values of the discretization order $a$ will produce the best rate of convergence with the least computational effort. The best we could expect is to have an error going to zero as $O(\Delta x^{2-\sigma})$, because even if we choose $\Delta t\sim \Delta x^\alpha$ with $\alpha < 2-\sigma$, there is an error term that goes like $O(\Delta x^{2-\sigma})$.

To start, we point out that when we apply the Corollary with $a=2\sigma$ and $D\Delta x^{\sigma}\leq \Delta t\leq C\Delta x^\sigma$,  we get the error estimate
\[
E_J=O(\Delta t+\Delta x^{2-\sigma})=O(\Delta x^\sigma+\Delta x^{2-\sigma}).
\]
This is an explicit convergence result, and in fact it is optimal for $\sigma =1$. In the other cases we can do better, as we show next.

\begin{teor}
Assume that $\sigma\in(0,1]$. Under the statements of Theorem \ref{convergence}, if $a=2(2-\sigma)$ then, for
\[D\Delta x^{2-\sigma}\leq\Delta t \leq C(m,f) \Delta x^{2-\sigma},\]
we have
\begin{equation}
F_J=O(\Delta x^{2-\sigma}).
\end{equation}
The same conclusion is reached if $\sigma\in(1,2)$ by choosing  $a=2$ and
\[
D\Delta x^{\sigma}\leq\Delta t \leq C(m,f) \Delta x^{\sigma}\,.
\]
\end{teor}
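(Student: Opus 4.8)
The plan is to read off the conclusion directly from the master error estimate of Theorem~\ref{convergence},
\[
F_J=O\Big(\Delta t+\Delta x^{2-\sigma}+\frac{\Delta x^a}{\Delta t}\Big),
\]
by inserting the prescribed value of $a$ and the two-sided bound on $\Delta t$, and checking that each of the three summands is $O(\Delta x^{2-\sigma})$. The only preliminary is to confirm that the stated bounds on $\Delta t$ are compatible with the CFL-type restriction $\Delta t\leq C(m,f)\Delta x^\sigma$ that the discrete maximum principle (Theorem~\ref{maxprin}), and hence Theorem~\ref{convergence}, requires.

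For $\sigma\in(0,1]$ I would take $a=2(2-\sigma)$. Since $2-\sigma\geq\sigma$ in this range, one has $\Delta x^{2-\sigma}\leq\Delta x^\sigma$ for small $\Delta x$, so the hypothesis $\Delta t\leq C(m,f)\Delta x^{2-\sigma}$ already forces $\Delta t\leq C(m,f)\Delta x^\sigma$ and the maximum principle applies. The upper bound controls the first term, $\Delta t=O(\Delta x^{2-\sigma})$; the middle term needs nothing; and the lower bound $\Delta t\geq D\Delta x^{2-\sigma}$ gives $\Delta x^a/\Delta t\leq D^{-1}\Delta x^{2(2-\sigma)-(2-\sigma)}=D^{-1}\Delta x^{2-\sigma}$. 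All three summands are thus $O(\Delta x^{2-\sigma})$.

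For $\sigma\in(1,2)$ I would take $a=2$, for which the prescribed window $D\Delta x^\sigma\leq\Delta t\leq C(m,f)\Delta x^\sigma$ is exactly the stability interval, so the maximum principle is immediate. Now $\sigma>2-\sigma$, so the upper bound gives $\Delta t=O(\Delta x^\sigma)=O(\Delta x^{2-\sigma})$; the middle term is again automatic; and the lower bound gives $\Delta x^2/\Delta t\leq D^{-1}\Delta x^{2-\sigma}$. Hence every term is $O(\Delta x^{2-\sigma})$.

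There is no genuine analytic obstacle here; the content is purely the design choice of $a$ and of the scaling $\Delta t\sim\Delta x^{2-\sigma}$ (resp. $\Delta x^\sigma$). The point to watch is that the two-sided bound on $\Delta t$ plays a double role: the upper bound keeps the explicit time error $\Delta t$ small, while the lower bound keeps the interior discretization error $\Delta x^a/\Delta t$ small, and the whole window must lie inside the CFL interval forced by the maximum principle. Verifying that this window is nonempty for small $\Delta x$ and admissible constants is the only bookkeeping one should not skip.
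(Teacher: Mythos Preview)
Your proof is correct and follows the same approach as the paper: plug the prescribed $a$ and the two-sided bound on $\Delta t$ into the master estimate $F_J=O(\Delta t+\Delta x^{2-\sigma}+\Delta x^a/\Delta t)$ from Theorem~\ref{convergence}, check CFL compatibility (via $\Delta x^{2-\sigma}\le \Delta x^\sigma$ when $\sigma\le 1$, and trivially when $\sigma>1$), and verify each summand is $O(\Delta x^{2-\sigma})$. If anything, your write-up is more direct than the paper's, which routes the same computation through the Condition~1/Condition~2 bookkeeping of Corollary~\ref{coroll4.7}.
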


\noindent $\bullet$ If $\sigma\in (0,1]$, we argue as follows. We want to find values of $a$ that produce the optimal rate of convergence. It is clear that the best error we can get is $O(\Delta x^{2-\sigma})$. Since $D\Delta x^{\frac{a}{2}}\leq \Delta t$, and we want
\[
\Delta t\le K\Delta x^{2-\sigma},
\]
any value of $a\ge 2(2-\sigma)$  allows us to take $ \Delta t=\Delta x^{\frac{a}{2}}$
and get the conclusion. The minimal value is $a= 2(2-\sigma)$.
 Notice that, since $\sigma \in(0,1)$, then $\Delta x^{2-\sigma}\leq \Delta x^\sigma$

\noindent $\bullet$ If ${\sigma\in (1,2)}$, restriction $\Delta t\leq C(m,f)\Delta x^\sigma$ gives optimal regularity because $\sigma > 2-\sigma$ and so
\[\Delta t \sim \Delta x^\sigma \leq \Delta x^{2-\sigma},\]
and so, we have only to ask for $\Delta x^{a+\sigma-2}=\Delta x^{\sigma}$, that is, $a=2$.

\begin{figure}[h!]
	\begin{center}
		\includegraphics[width=0.8\textwidth]{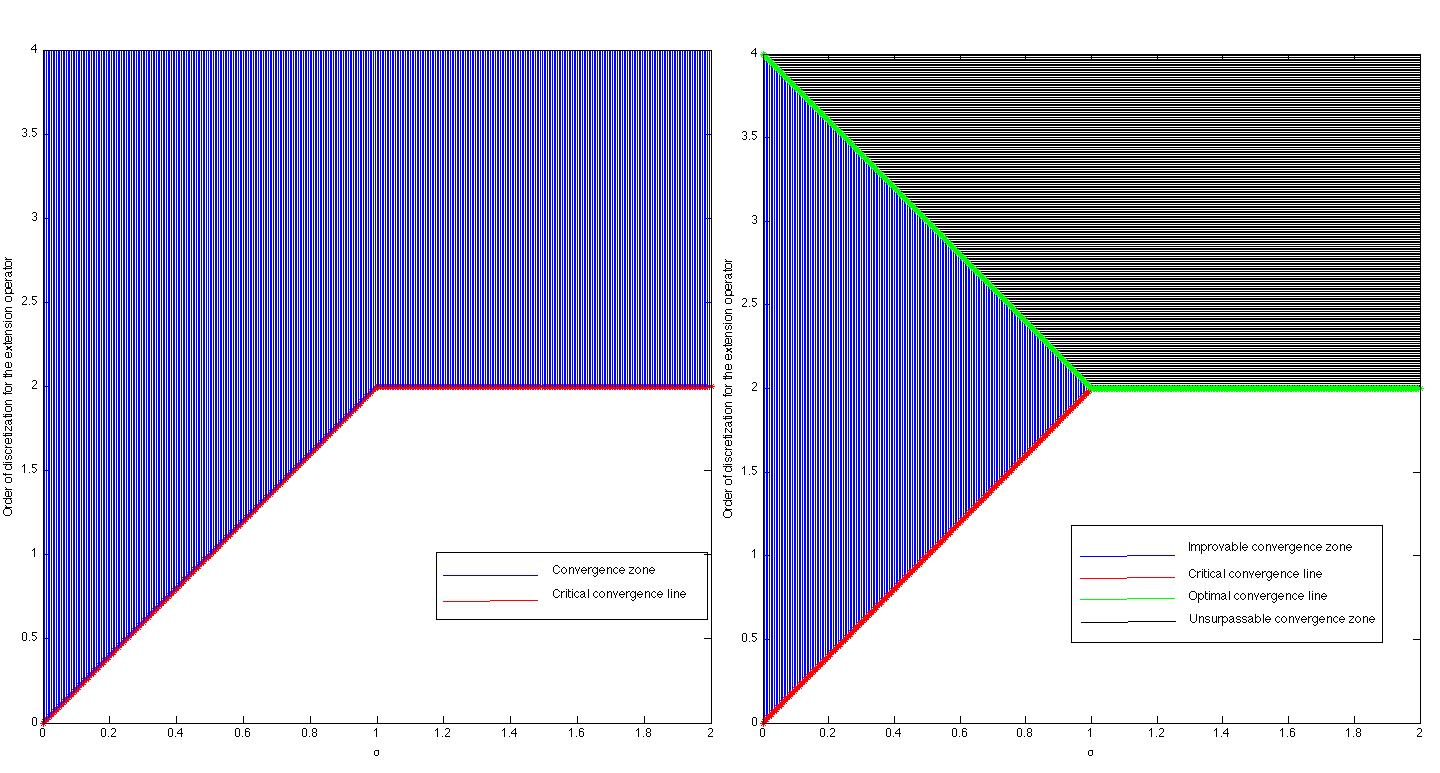}
		\caption{Convergence zones depending on the discretization of $L_\sigma$.}
        		\label{m10}
	\end{center}
\end{figure}


\subsection{Order of discretization of $L_\sigma$ for optimal rate of convergence}

Let us examine the required order of discretization of the extension operator to reach the optimal rate of convergence se have just studied. We recall that the operator $L_\sigma v=\nabla \cdot (y^{1-\sigma}\nabla v)$ can also be written as
\begin{equation}
L_\sigma v(x,y)= y^{1-\sigma }\Delta v(x,y)+(1-\sigma) y^{-\sigma} \frac{\partial v}{\partial y}(x,y).
\end{equation}

Let $\Delta^c$ be a discretization of order $c$ of the laplacian $\Delta$, that is,
\[\max_{i,k}|\Delta^c v_i^k-\Delta v(x_i,y_k)|=O(\Delta x^c).\]

Let also $D_y^d$ be a discretization of order $d$ of the first derivative, that is,
\[\max_{i,k}|D_y^d v_i^k- \frac{\partial v}{\partial y}(x_i,y_k)|=O(\Delta x^d).\]

Now, a natural way of the defining the discretization $L_\sigma^{c,d}$ of the operator $L_\sigma$ is
\begin{equation}\label{discoperator}
L_\sigma^{c,d} v_i^k= y^{1-\sigma }\Delta^c v_i^k +(1-\sigma) y^{-\sigma} D_y^d v_i^k,
\end{equation}
and then,
\[|L_\sigma^{c,d} v_i^k-L_\sigma v(x_i,y_k)|\leq y_k^{1-\sigma}|\Delta^c v_i^k-\Delta v(x_i,y_k)|+y_k^{-\sigma}|1-\sigma||D_y^d v_i^k- \frac{\partial v}{\partial y}(x_i,y_k)|\]
\[\leq  K y_k^{1-\sigma} \Delta x^{c}+K|1-\sigma|  y_k^{-\sigma} \Delta x^d. \ \ \ \]

\noindent $\bullet$ Case ${\sigma\in (0,1]}$. Then,
\[
y_k^{1-\sigma} \Delta x^{c}\leq  Y^{1-\sigma} \Delta x^{c},\]
and
\[
y_k^{-\sigma} \Delta x^d\leq \frac{1}{\Delta x^{\sigma}}\Delta x^d=\Delta x^{d-\sigma},
\]

Then, for any problem posed in a bounded domain ($Y<+\infty$), we have
\begin{equation}
\max_{i,k}|L_\sigma^{c,d} v_i^k-L_\sigma v(x_i,y_k)|=O(\Delta x^{c}+\Delta x^{d-\sigma}).
\end{equation}

As we have seen in the previous section, for optimal rate of convergence in this case $\sigma\in (0,1]$ we need a discretization of \ $L_\sigma$ \ of order greater than $a=2(2-\sigma)$, which implies
\begin{equation}
c=2(2-\sigma) \mbox{ and } d=4-\sigma.
\end{equation}

\noindent $\bullet$ Case ${\sigma\in (1,2)}$. Then,
\[
y_k^{1-\sigma} \Delta x^{c}\leq  \Delta x^{1-\sigma} \Delta x^{c}=\Delta x^{c+1-\sigma},\]
and
\[
y_k^{-\sigma} \Delta x^d\leq \Delta x^{d-\sigma}.
\]

Then, for any problem, we have
\begin{equation}
\max_{i,k}|L_\sigma^{c,d} v_i^k-L_\sigma v(x_i,y_k)|=O(\Delta x^{c+1-\sigma}+\Delta x^{d-\sigma}).
\end{equation}

In the case $\sigma \in (1,2)$, the optimal regularity is obtained with a discretization of $L_\sigma$ of order greater than or equal to 2, which implies
\begin{equation}
c=1+\sigma \mbox{ and } d=2+\sigma.
\end{equation}

\noindent {\bf Corollary.} {\sl Using discretizations of integer order for derivatives of integer order, we arrive at the following required order of discretization  depending on $\sigma$:

(i) If ${\sigma\in (0,1/2)}$, then $c=4$ and $d=4$.

(ii) If ${\sigma\in (1/2,1)}$, then $c=3$ and $d=4$.

(iii) If  ${\sigma=1}$, the we do not have the second term in $L_\sigma$ and so we only need $c=2$.

(iv) If ${\sigma\in (1,2)}$, then $c=3$ and $d=4$.}

\medskip

We observe that, except in the case $\sigma =1$, the computational cost of the proposed higher order discretization  can be rather expensive in order to obtain the optimal rate of convergence. Moreover, this higher order discretization has to be adapted to their position in the mesh. For example, if we want to have a third order discretization of $\frac{\partial}{\partial y}$ we will need a four points rule. This rule can not be the same if we are close to the boundaries $\Gamma_h$ or the $\Gamma_d$, because there will not be enough points in the direction of the boundary.

We think that some improvements in this section could be obtained. One possibility passes through obtaining direct discretizations of the extension operator $L_\sigma$. One may also think of using an adapted scheme where higher order discretizations are used only near the boundary. This kind of higher order discretizations are well presented by Ciarlet in \cite{ciarlet}. This reference also gives sufficient and necessary  conditions on the finite difference matrix  for the discretizations to be compatible with the discrete maximum principle.

Of course, we are not forced to look for the optimal rate of convergence in order to have the best numerical convergence. In this case, we can of course relax the requirements on the discretization order  of the extension operator.


\section{Lower order convergence}

We have been concerned with obtaining optimal rates of convergence. Frequently, this will imply higher order discretizations and consequently higher computational cost. Here we will study the minimal order of discretization needed to  have convergence, even if this convergence may be very slow. We recall that, according to  Theorem \ref{convergence} the error of the numerical method is estimated as
\[
E_J=O(\Delta t+\Delta x^{2-\sigma}+ \frac{\Delta x^a}{\Delta t} )\,,
\]
where $a$ is the order of discretization of the extension operator. We obtain convergence by requiring that either \
$\Delta x^a\leq \Delta t \Delta t^\epsilon$,  or \  $ \Delta x^a\leq \Delta t \Delta x^{\delta}$
for some $\epsilon,\delta >0$.

\medskip

\noindent {\sc Condition 1:} \ $\Delta x^{a}\leq \Delta t^{1+\epsilon}$, that is, $\Delta x^{\frac{a}{1+\epsilon}}\leq \Delta t$. We need this condition to be compatible with the CFL condition $\Delta t\leq \Delta x^\sigma$, that is,
\[
\Delta x^{\frac{a}{1+\epsilon}}\leq \Delta x^{\sigma}.
\]
So, in this case we need, {$a> \sigma$}.

\medskip

\noindent {\sc Condition 2:} \ $\Delta x^{a}\leq \Delta t\Delta x^{\delta}$, that is, $\Delta x^{a-\delta}\leq \Delta t$. We need this condition to be compatible with the CFL condition $\Delta t\leq \Delta x^\sigma$, that is,
\[\Delta x^{a-\delta}\leq \Delta x^{\sigma}.\]
So, we again need, {$a>\sigma$} (the precise condition is the same if $\delta=\epsilon\sigma$).

\begin{coro}
Under the statements of Theorem {\rm \ref{convergence}}, assume also that, for a fixed $\delta > 0$ and some constant $D>0$:
\begin{equation}
D\Delta x^{a-\delta }\leq \Delta t\leq C(m,f) \Delta x^\sigma \mbox{ for some } a\geq \sigma+\delta.
\end{equation}
Then,
\begin{equation}
F_J=O(\Delta t+\Delta x^{2-\sigma}+ \Delta x^{\delta}).
\end{equation}
\end{coro}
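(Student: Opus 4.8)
The plan is to apply Theorem \ref{convergence} directly and then verify that the hypotheses we impose guarantee that the extra error term $\Delta x^a/\Delta t$ is absorbed into a controllable power of $\Delta x$. Theorem \ref{convergence} already gives us, under the CFL condition $\Delta t\leq C(m,f)\Delta x^\sigma$, the estimate
\[
F_J=O\left(\Delta t+\Delta x^{2-\sigma}+\frac{\Delta x^a}{\Delta t}\right).
\]
So the entire task reduces to bounding the quotient $\Delta x^a/\Delta t$ by $\Delta x^\delta$ (up to a constant), which would yield the claimed $F_J=O(\Delta t+\Delta x^{2-\sigma}+\Delta x^\delta)$.

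First I would invoke the lower bound on the time step. By assumption $D\Delta x^{a-\delta}\leq \Delta t$, hence
\[
\frac{\Delta x^a}{\Delta t}\leq \frac{\Delta x^a}{D\,\Delta x^{a-\delta}}=\frac{1}{D}\,\Delta x^{\delta}=O(\Delta x^\delta).
\]
This is the key inequality: the role of the lower bound on $\Delta t$ is precisely to stop the problematic quotient from blowing up as $\Delta x\to 0$. Substituting this into the conclusion of Theorem \ref{convergence} immediately gives $F_J=O(\Delta t+\Delta x^{2-\sigma}+\Delta x^\delta)$.

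The only point that requires genuine care — and which I expect to be the main obstacle — is checking that the two-sided constraint $D\Delta x^{a-\delta}\leq \Delta t\leq C(m,f)\Delta x^\sigma$ is nonvacuous, i.e. that for small $\Delta x$ there actually exists an admissible $\Delta t$. Using the note that $g(\sigma)=\Delta x^\sigma$ is monotone decreasing in its exponent for small fixed $\Delta x$, the interval is nonempty precisely when the lower exponent is at least the upper one, namely $a-\delta\geq \sigma$, which is exactly the standing hypothesis $a\geq \sigma+\delta$. With that compatibility confirmed, the CFL condition needed to apply Theorem \ref{convergence} holds by the right-hand inequality, the quotient bound holds by the left-hand inequality, and the corollary follows. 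I would close by remarking that, since $\delta>0$ is arbitrary and $a$ need only exceed $\sigma$ (taking $\delta$ small), this shows convergence is obtained with any discretization order $a>\sigma$, at the cost of the slow extra term $\Delta x^\delta$.
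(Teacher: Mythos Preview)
Your proof is correct and follows essentially the same route as the paper: the discussion preceding the corollary (``Condition 2'') is exactly the argument you give---use the lower bound $D\Delta x^{a-\delta}\leq \Delta t$ to control $\Delta x^a/\Delta t$ by $O(\Delta x^\delta)$, and then verify compatibility with the CFL condition via $a-\delta\geq\sigma$. There is nothing missing.
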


Using the notation of the discretization $(\ref{discoperator})$, the condition of the Corollary implies the following orders of discretization:

-If $\underline{\sigma\in (0,1)}$, \ $c=\sigma+\delta \mbox{ and } d=2\sigma+\delta.$

-If $\underline{\sigma\in [1,2)}$, \ $c=2\sigma-1+\delta \mbox{ and } d=2\sigma+\delta.$

\begin{coro}
As in the previous sections, we only use discretizations of integer order for derivatives of integer order. These are the orders of discretization that we need, depending on $\sigma$:

{\rm (i)} If ${\sigma\in (0,1/2)}$, then $c=1$ and $d=1$.

{\rm (ii)} If ${\sigma\in (1/2,1)}$, then $c=1$ and $d=2$.

{\rm (iii)} If ${\sigma=1}$, we do not have the second term, and so $c=2$.

{\rm (iv)} If ${\sigma\in (1,3/2)}$, then $c=2$ and $d=3$.

{\rm (v)} If  ${\sigma\in (3/2,2)}$, then $c=3$ and $d=4$.

\end{coro}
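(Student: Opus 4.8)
The plan is to read the required integer orders $c$ and $d$ directly off the fractional requirements displayed just above the statement: for $\sigma\in(0,1)$ one needs $c\geq\sigma+\delta$ and $d\geq 2\sigma+\delta$, and for $\sigma\in[1,2)$ one needs $c\geq 2\sigma-1+\delta$ and $d\geq 2\sigma+\delta$, with $\delta>0$ arbitrarily small. These thresholds come from inserting the order of the composite discretization $L_\sigma^{c,d}$ of \eqref{discoperator} --- shown above to be $O(\Delta x^{c}+\Delta x^{d-\sigma})$ when $\sigma\in(0,1)$ and $O(\Delta x^{c+1-\sigma}+\Delta x^{d-\sigma})$ when $\sigma\in[1,2)$ --- into the lower-order convergence condition $a\geq\sigma+\delta$ of the preceding Corollary. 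Once $c$ and $d$ are forced to be positive integers, the minimal admissible value of each is the ceiling of the corresponding fractional threshold, and since $\delta>0$ may be taken as small as we please these ceilings are computed in the limit $\delta\to 0^{+}$.

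First I would dispatch the region $\sigma\in(0,1)$, where the minimal integers are $c=\lceil\sigma\rceil$ and $d=\lceil 2\sigma\rceil$. For $\sigma\in(0,1/2)$ both $\sigma$ and $2\sigma$ lie in $(0,1)$, so both ceilings equal $1$ and $(c,d)=(1,1)$; for $\sigma\in(1/2,1)$ we still have $\sigma<1$ so $c=1$, but now $2\sigma\in(1,2)$ forces $d=2$, giving $(c,d)=(1,2)$. The borderline $\sigma=1$ is treated on its own: there the coefficient $1-\sigma$ of the first-derivative term of $L_\sigma$ vanishes, so $L_1=\Delta$ and only the Laplacian discretization remains; the requirement $c\geq 1+\delta$ then rounds up to $c=2$, with $d$ playing no role.

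Next I would treat $\sigma\in(1,2)$, where the minimal integers are $c=\lceil 2\sigma-1\rceil$ and $d=\lceil 2\sigma\rceil$. For $\sigma\in(1,3/2)$ one has $2\sigma-1\in(1,2)$ and $2\sigma\in(2,3)$, hence $(c,d)=(2,3)$, while for $\sigma\in(3/2,2)$ one has $2\sigma-1\in(2,3)$ and $2\sigma\in(3,4)$, hence $(c,d)=(3,4)$. I expect no analytic difficulty here; the only point demanding care is the behaviour at the interval endpoints $\sigma=1/2$ and $\sigma=3/2$, where a threshold coincides with an integer and the value of the ceiling jumps. This is precisely why the statement is phrased with open intervals excluding those two values, so that for every admissible $\sigma$ and all sufficiently small $\delta$ the ceilings --- and hence the integers $c$ and $d$ --- are unambiguous.
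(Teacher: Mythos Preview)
Your proposal is correct and follows exactly the route the paper takes: the corollary is stated without a separate proof, being an immediate consequence of the fractional thresholds $c=\sigma+\delta$, $d=2\sigma+\delta$ (for $\sigma\in(0,1)$) and $c=2\sigma-1+\delta$, $d=2\sigma+\delta$ (for $\sigma\in[1,2)$) displayed just before it, rounded up to the nearest integer as $\delta\to0^{+}$. Your explicit ceiling computation and your remark on why the endpoints $\sigma=1/2,\,3/2$ are excluded simply make that step transparent.
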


\section{The problem in the whole space}

We have been comparing solutions of the numerical scheme  (\ref{numericmethod}) with theoretical solutions of \eqref{problemextbdd}, with both problems  posed in a bounded domain. The proofs presented in this paper require a certain regularity of the theoretical solutions. This kind of results are already known for solutions of the problem posed in $\mathbb{R}^{N+1}_+$ with quite general data, (\cite{afracpor4}). Similar results are under study for bounded domains. In this section we propose an application of our previous results. Indeed,  we will compare the solution to the numerical scheme (\ref{numericmethod}) posed in the bounded domain $\Omega= [-X,X]\times [0, X]$ with the solution to problem in the whole  space. Note that technically the problem is posed in $\mathbb{R}^{N+1}_+$, cf. \eqref{problemext}. Therefore, the theoretical solution has the required regularity.

The comparison can only be done in the domain where the numerical scheme is defined. A difficulty appears with this kind of comparison, since from now on we assume that the theoretical solution $w\not = 0$ at $\Gamma_h$ in view of the property of strict positivity of all nonnegative solutions proved in \cite{afracpor, afracpor2}. This implies that an extra error will be introduced to the numerical solution, coming from the lateral boundary. Since we want a convergence result from the numerical solution in the bounded domain to the problem posed in $\mathbb{R}^{N+1}_+$, we will make $\Omega\to \mathbb{R}^{N+1}_+$ as $\Delta x\to 0$.



We need to control the error coming from the lateral boundary. In \cite{jlbar}, an upper bound for the solution with compactly supported initial data is found by passing through the Barenblatt solutions of problem (\ref{problem}). The upper bound is,
\[
u_M^*(x,t)=t^{-\alpha} F(|x|t^{-\beta}),
\]
where $F(\xi)\leq C |\xi|^{-(N+\sigma)}$ and
\[
\alpha=\frac{N}{N(m+1)+\sigma}, \ \ \ \ \beta=\frac{1}{N(m+1)+\sigma}.
\]
Since $-\alpha+\beta(N+\sigma)=\beta\sigma$, we have the next bound in $\Gamma_h$,
\[
u_M^*(X,t)\leq Ct^{-\alpha+\beta(N+\sigma)}\frac{1}{X^{N+\sigma}}\leq Ct^{\beta\sigma}\frac{1}{X^{N+\sigma}}\leq C\cdot T^{\beta\sigma}\frac{1}{X^{N+\sigma}}.
\]
Then, if we impose the following extra condition condition in the domain,

\begin{equation}\label{domaincond}
\frac{C T^{\beta\sigma}}{|X|^{N+\sigma}}\leq K \Delta x^a,
\end{equation}
for a fixed constant $K>0$,  we can adapt the proofs of Theorems \ref{localtruncerror} and \ref{convergence} to obtain the desired convergence. The easiest choice is $K=1$ and so, condition (\ref{domaincond}) becomes
\begin{equation}
|X|\geq\frac{ (C\cdot T^{\beta\sigma})^{\frac{1}{N+\sigma}}}{\Delta x^\frac{a}{N+\sigma}}.
\end{equation}

The rest of the changes are as follows: in \textbf{Theorem \ref{localtruncerror}}, the local truncation error in the interior nodes of $\Omega$  and in $\Gamma_d$ still being the same but is not zero anymore in $\Gamma_h$. Now if $(x_i,y_k) \in \Gamma_h$,
\[(\tau_j)_i^k=(w_j)_i^k\leq  \frac{C \cdot T^{\beta\sigma}}{|X|^{N+\sigma}}\leq\Delta x^a.\]
and so $\Lambda=O\,(\Delta t(\Delta x^{2-\sigma}+\Delta t)+\Delta x^a)$ as before.

In \textbf{Theorem \ref{convergence}}, again the only change is that the error in $\Gamma_h$ is not zero. But, if $(x_i,y_k) \in \Gamma_h$,
\[(e_j)_i^k=(w_j)_i^k-(W_j)_i^k=(w_j)_i^k\leq D\Delta x^a.\]
and so \[E_J=O(\Delta t+\Delta x^{2-\sigma}+\frac{\Delta x^a}{\Delta t}).\]

We thus get the following result.

\begin{teor}
Let $w$ be the solution to Problem (\ref{problemext}) (posed in $\RR$) and $(W_j)_i^k$  be the solution to system (\ref{numericmethod}) (posed in the bounded domain $\Omega=[-X,X]\times[0,X]$) with $m\geq 1$ and compactly supported initial data $f$. Assume that:

1. There exists a constant $C(m,f)>0$ such that
\[\Delta t\leq C(m,f)\Delta x^\sigma.\]

2. The boundary of the domain $\Omega=[-X,X]\times[0,X]$  is such, for some constant $L=L(m,T,f,N,\sigma)>0$ we have,
\[|X|\geq\frac{ L}{\Delta x^\frac{a}{N+\sigma}}.
\]
Then,
\[\max_{i,j,k}|w(x_i,y_k,t_j)-(W_j)_i^k|=O(\Delta t+\Delta x^{2-\sigma}+\frac{\Delta x^a}{\Delta t}).\]
\end{teor}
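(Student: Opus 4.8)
The plan is to mimic the two-stage argument already used for the bounded-domain case—first controlling the local truncation error as in Theorem \ref{localtruncerror}, then running the discrete Gronwall recursion as in the proof of Theorem \ref{convergence}—and to account for the single new source of error, namely that the exact solution $w$ no longer vanishes on the lateral boundary $\Gamma_h$. The entire difficulty is concentrated in showing that this boundary defect is of size $O(\Delta x^a)$, so that it is absorbed into the truncation error $\Lambda$ and does not worsen the final estimate.

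First I would quantify the boundary value of $w$. Since $f$ is compactly supported and nonnegative, the comparison principle of \cite{jlbar} bounds the solution $u$ from above by the Barenblatt profile $u_M^*(x,t)=t^{-\alpha}F(|x|t^{-\beta})$ with $F(\xi)\le C|\xi|^{-(N+\sigma)}$ and the exponents $\alpha,\beta$ as stated. The key algebraic identity $-\alpha+\beta(N+\sigma)=\beta\sigma$ collapses the time dependence into a positive power, so on a lateral node $(X,t)$ one gets the time-uniform bound
\[
u_M^*(X,t)\le C\,T^{\beta\sigma}\,|X|^{-(N+\sigma)}.
\]
Imposing hypothesis 2 (with $L$ chosen as $(C\,T^{\beta\sigma})^{1/(N+\sigma)}$) is precisely what forces $CT^{\beta\sigma}|X|^{-(N+\sigma)}\le\Delta x^a$, so the boundary value of $w$ on $\Gamma_h$ is $O(\Delta x^a)$.

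Next I would revisit the local truncation error. On the interior nodes and on $\Gamma_d$ nothing changes, so those contributions remain $O(\Delta x^a)$ and $O(\Delta t(\Delta t+\Delta x^{2-\sigma}))$ respectively. The only new term sits on $\Gamma_h$: there the scheme sets $(W_j)_i^k=0$, so plugging in the exact solution leaves $(\tau_j)_i^k=(w_j)_i^k$, which by the previous step is at most $\Delta x^a$. Hence $\Lambda=O(\Delta t(\Delta x^{2-\sigma}+\Delta t)+\Delta x^a)$, exactly as in Theorem \ref{localtruncerror}. I would then rerun the error recursion of Theorem \ref{convergence} essentially verbatim: the boundary error $(e_j)_i^k=(w_j)_i^k-(W_j)_i^k=(w_j)_i^k\le D\Delta x^a$ on $\Gamma_h$ contributes only at order $\Delta x^a$, which is already subsumed in the interior-propagation constant, so the discrete inequality $E_j\le(1+C/J)E_{j-1}+\Lambda$ together with $(1+C/J)^J\le e^C$ yields $E_J=O(\Delta t+\Delta x^{2-\sigma}+\Delta x^a/\Delta t)$; passing to the $w$-variable via $F_J\le C(m,f)E_J$ closes the argument.

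The main obstacle is really the first step: one must be certain that the Barenblatt upper bound is available with the stated exponents and the decay $F(\xi)\le C|\xi|^{-(N+\sigma)}$, and that the scaling exponent $-\alpha+\beta(N+\sigma)$ is nonnegative—it equals $\beta\sigma>0$—so that replacing $t$ by $T$ is legitimate. This is exactly where the regularity theory for the whole-space problem and the reference \cite{jlbar} are indispensable. Once the boundary defect is pinned at $O(\Delta x^a)$, everything else is a formal repetition of the bounded-domain proofs, because $\Delta x^a$ is precisely the order already carried by $\Lambda$.
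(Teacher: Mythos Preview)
Your proposal is correct and matches the paper's argument essentially step for step: the paper also invokes the Barenblatt upper bound from \cite{jlbar}, uses the identity $-\alpha+\beta(N+\sigma)=\beta\sigma$ to get $u_M^*(X,t)\le C\,T^{\beta\sigma}|X|^{-(N+\sigma)}$, reads hypothesis~2 as forcing this quantity below $\Delta x^a$, and then reruns Theorems~\ref{localtruncerror} and~\ref{convergence} with the sole modification that on $\Gamma_h$ one now has $(\tau_j)_i^k=(e_j)_i^k=(w_j)_i^k\le D\Delta x^a$. There is no substantive difference in approach.
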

\begin{nota}
Condition 2 says that as ${\Delta x \to 0}$ we need $|X|\to \infty$ and so $\Omega \to  \RR$.
\end{nota}

\section{Extensions and comments}

\noindent $\bullet$ As a natural extension of the results of this paper we can consider the same equation with
data of any sign, and also the equation with exponent $0<m<1$ (the fast diffusion case). The method we use here does not directly apply to such cases. For instance, in the case of signed data, the solutions are not supposed to be classical, so a different approach is needed.
\normalcolor

\section*{Acknowledgments}
Both authors partially supported by the  Spanish Project MTM2011-24696.
The first author  also supported by a FPU grant from Ministerio de Educaci\'on, Ciencia y Deporte, Spain.



\vspace{.5cm}
\textbf{Keywords: }Nonlinear diffusion equation, fractional Laplacian, numerical method, finite difference, rate of convergence.


\noindent {\sc Authors' address: }  Departamento de Matem\'aticas, Universidad Aut\'onoma de Madrid,\\
Campus de Cantoblanco, 28049 Madrid, Spain.







\end{document}